\documentclass[a4paper,12pt]{amsart}

\usepackage{float}
\usepackage{euscript,eufrak,verbatim}
\usepackage{graphicx}
\usepackage[usenames]{color}
\usepackage[colorlinks,linkcolor=red,anchorcolor=blue,citecolor=blue]{hyperref}
\usepackage{amsmath}
\usepackage{amsthm}
\usepackage{graphicx} 
\usepackage{amssymb} 
\usepackage{amsfonts}

\usepackage{mathrsfs}
\usepackage{amscd}

\usepackage{tikz-cd}

\usepackage{bbm}

\usepackage{booktabs,tabularx}

\makeatletter
\newcommand{\svdots}{%
  \vbox{\fontsize{\sf@size}{\sf@size pt}\linespread{0.3}\selectfont
    \kern0.2\baselineskip
    \hbox{.}\hbox{.}\hbox{.}%
    \kern0.1\baselineskip
  }%
}
\makeatother

\theoremstyle{plain}
\newtheorem{main theorem}{Main Theorem}
\newtheorem{theorem}{Theorem}[section]
\newtheorem{lemma}[theorem]{Lemma}

\newtheorem{proposition}[theorem]{Proposition}
\newtheorem{claim}[theorem]{Claim}

\newtheorem{lemma-definition}[theorem]{Lemma-Definition}
\theoremstyle{definition}

\newtheorem{remark}[theorem]{Remark}

\makeatletter 
\@addtoreset{equation}{section}
\numberwithin{equation}{section}

\newcommand{\norm}[1]{\left\lVert#1\right\rVert}

\newcommand{\diam}{\mathrm{Diam}}

\newcommand{\h}{h_{\mathrm{top}}}
\newcommand{\FHh}{h_{\mathrm{FH}}}

\newcommand{\hinf}{\underline{h}_{\mathrm{top}}}

\title[Comparison of two approaches to weighted topological entropy]
{Comparison of two approaches to weighted topological entropy}

\author{Masaki Tsukamoto}

\address[Masaki Tsukamoto]
{Department of Mathematics, Kyoto University, Kyoto 606-8502, Japan}

\email{tsukamoto@math.kyoto-u.ac.jp}

\begin{document}

\subjclass[2020]{37A35, 37B40, 28A80, 37D35}

\keywords{dynamical system, weighted topological entropy, variational principle}

\thanks{M.T. was supported by JSPS KAKENHI JP25K06974.}

\begin{abstract}
We compare the Feng--Huang and covering definitions of weighted
topological entropy for equivariant continuous maps between
dynamical systems.
The two quantities agree on the whole space by their variational
principles, but need not agree on non-invariant subsets.
We prove that the Feng--Huang
entropy is bounded above by the covering entropy on every subset,
without assuming invariance.
The proof does not rely on measure theory.
Our main result provides a new perspective on
the two relative weighted variational principles.
\end{abstract}

\maketitle

\section{Introduction}  \label{section: Introduction}

\subsection{Background and main result}
\label{subsection: Background and main result}

The main purpose of this paper is to compare two approaches to weighted
topological entropy for arbitrary subsets of a dynamical system, with
particular emphasis on the non-invariant setting.

Motivated by the fractal geometry of self-affine carpets and sponges,
Feng and Huang \cite{Feng--Huang} introduced weighted topological entropy.
It extends the classical theory of topological entropy from a single
dynamical system to the setting of a factor map between dynamical systems.
Their definition follows Bowen's approach \cite{Bowen} to topological
entropy for noncompact sets, in which the construction parallels that
of Hausdorff dimension.

Following the pioneering work of Feng and Huang, the author introduced
a different definition in \cite{Tsukamoto}.
This definition builds on ideas of Barral and Feng
\cite{Barral--Feng_arXiv, Barral--Feng} and is closer in spirit to the
standard definition of topological entropy.
We begin by recalling the two definitions.

A pair $(X,T)$ is called a \textbf{dynamical system} if $X$ is a compact
metric space and $T\colon X\to X$ is a continuous map.
Let $(X,T)$ and $(Y,S)$ be two dynamical systems, and let
$\pi\colon X\to Y$ be an equivariant continuous map, meaning that
$\pi\circ T=S\circ\pi$.
We often write $\pi\colon(X,T)\to(Y,S)$ for such a map.
Fix a real number $0\leq w\leq1$.
We consider two quantities in this setting: Feng--Huang's weighted
topological entropy $\FHh^w(X,T)$ and the covering weighted topological
entropy $\h^w(X,T)$.
We first recall the Feng--Huang definition, using notation slightly
different from that in \cite{Feng--Huang}.

Let $\mathbf{d}$ and $\mathbf{d}^\prime$ be metrics on $X$ and $Y$,
respectively.
For a natural number $n$, define the \textbf{Bowen metrics}
$\mathbf{d}_n$ on $X$ and $\mathbf{d}^\prime_n$ on $Y$ by
\[
\mathbf{d}_n(x,x^\prime)
=
\max_{0\leq k<n}\mathbf{d}(T^kx,T^kx^\prime),
\quad
\mathbf{d}^\prime_n(y,y^\prime)
=
\max_{0\leq k<n}\mathbf{d}^\prime(S^ky,S^ky^\prime).
\]
For $x,x^\prime\in X$, we also set
\[
\mathbf{d}^w_n(x,x^\prime)
=
\max\left(
\mathbf{d}_{\lceil wn\rceil}(x,x^\prime),
\mathbf{d}^\prime_n(\pi(x),\pi(x^\prime))
\right).
\]
Here we use the convention $\mathbf{d}_0(x,x^\prime)=0$ when
$\lceil wn\rceil=0$.
For $\varepsilon>0$, the \textbf{$w$-weighted Bowen ball} is defined by
\[
B^w_n(x,\varepsilon)
=
\{x^\prime\in X\mid\mathbf{d}^w_n(x,x^\prime)<\varepsilon\}.
\]

Let $\Omega$ be a subset of $X$, not necessarily $T$-invariant.
Let $N$ be a natural number, and let $s\geq0$.
We consider coverings of $\Omega$ by at most countably many
$w$-weighted Bowen balls $B^w_{n_i}(x_i,\varepsilon)$ with $n_i\geq N$:
\begin{equation}
\label{eq: covering of weighted Bowen balls}
\Omega\subset\bigcup_i B^w_{n_i}(x_i,\varepsilon),
\quad (x_i\in X,\ n_i\geq N).
\end{equation}
Set
\[
\Lambda^{w,s}_{N,\varepsilon}(\Omega)
:=\inf\sum_i e^{-sn_i},
\]
where the infimum is taken over all at most countable coverings
satisfying \eqref{eq: covering of weighted Bowen balls}.
This quantity is nondecreasing in $N$, so we define
\[
\Lambda^{w,s}_\varepsilon(\Omega)
:=\lim_{N\to\infty}\Lambda^{w,s}_{N,\varepsilon}(\Omega).
\]
As in the definition of Hausdorff dimension, there is a unique
critical value $\FHh^w(\Omega,T,\varepsilon)$ such that
\[
\Lambda^{w,s}_\varepsilon(\Omega)
=
\begin{cases}
\infty & (s<\FHh^w(\Omega,T,\varepsilon)),\\
0 & (s>\FHh^w(\Omega,T,\varepsilon)).
\end{cases}
\]
We then define \textbf{Feng--Huang's $w$-weighted topological entropy
of $\Omega$} by
\[
\FHh^w(\Omega,T)
=\lim_{\varepsilon\to0}\FHh^w(\Omega,T,\varepsilon).
\]
When we wish to emphasize the underlying map
$\pi\colon(X,T)\to(Y,S)$, we also write $\FHh^w(\Omega,\pi,T)$.
We adopt the convention that $\FHh^w(\Omega,T)=-\infty$ if
$\Omega=\emptyset$.

When $\Omega=X$, Feng and Huang \cite[Theorem 1.4]{Feng--Huang}
established the variational principle
\begin{equation}
\label{eq: Feng--Huang variational principle}
\FHh^w(X,T)
=
\sup_{\mu\in\mathscr{M}^T(X)}
\left\{wh_\mu(X,T)+(1-w)h_{\pi_*\mu}(Y,S)\right\},
\end{equation}
where $\mathscr{M}^T(X)$ is the set of all $T$-invariant Borel
probability measures on $X$, and $h_\mu(X,T)$ and
$h_{\pi_*\mu}(Y,S)$ denote the Kolmogorov--Sinai entropies of
$\mu$ and its push-forward $\pi_*\mu$, respectively.

We next recall the definition introduced in \cite{Tsukamoto},
which is based on a two-step covering procedure.
Let $\Omega\subset X$, $n\in\mathbb{N}$, and $\varepsilon>0$.
The \textbf{$\varepsilon$-covering number}
$\#(\Omega,\mathbf{d}_n,\varepsilon)$ of $(\Omega,\mathbf{d}_n)$
is the smallest integer $k$ for which there are open subsets
$U_1,\dots,U_k$ of $X$ satisfying
$\Omega\subset U_1\cup\dots\cup U_k$ and
$\diam(U_i,\mathbf{d}_n)<\varepsilon$ for every $1\leq i\leq k$.
The \textbf{$w$-weighted two-step $\varepsilon$-covering number}
of $\Omega$ is defined by
\begin{equation*}
\begin{split}
&\#^w\left(\Omega,n,\varepsilon\right)\\
&=\inf\left\{
\sum_{j=1}^{\ell}
\left(\#\left(\Omega\cap\pi^{-1}(V_j),\mathbf{d}_n,\varepsilon\right)\right)^w
\ \middle|\
\parbox{3in}{\centering
$V_1,\dots,V_\ell$ are open subsets of $Y$ satisfying
$\pi(\Omega)\subset V_1\cup\dots\cup V_\ell$ and
$\diam(V_j,\mathbf{d}^\prime_n)<\varepsilon$ for every $j$}
\right\}.
\end{split}
\end{equation*}
Here we set
$\left(\#\left(\Omega\cap\pi^{-1}(V_j),\mathbf{d}_n,\varepsilon\right)\right)^w=0$
whenever $\Omega\cap\pi^{-1}(V_j)=\emptyset$.
We define the \textbf{covering $w$-weighted topological entropy}
of $\Omega$ by
\begin{equation}
\label{eq: covering weighted topological entropy}
\h^w(\Omega,T)
=
\lim_{\varepsilon\to0}
\left(
\limsup_{n\to\infty}
\frac{\log\#^w\left(\Omega,n,\varepsilon\right)}{n}
\right).
\end{equation}
As before, we also write $\h^w(\Omega,\pi,T)$ 
when we wish to
emphasize the underlying map $\pi\colon(X,T)\to(Y,S)$.
We adopt the convention that $\h^w(\Omega,T)=-\infty$ if
$\Omega=\emptyset$.

Some readers may wonder what happens if the limit superior in
\eqref{eq: covering weighted topological entropy} is replaced by
the limit inferior.
More precisely, we may consider
\begin{equation}
\label{eq: covering weighted topological entropy, liminf}
\hinf^w(\Omega,T)
=
\lim_{\varepsilon\to0}
\left(
\liminf_{n\to\infty}
\frac{\log\#^w\left(\Omega,n,\varepsilon\right)}{n}
\right),
\end{equation}
and ask which of the two quantities $\h^w(\Omega,T)$ and
$\hinf^w(\Omega,T)$ is more useful.
Somewhat surprisingly, the limsup version $\h^w(\Omega,T)$ is
better suited to our purposes, whereas the liminf version
$\hinf^w(\Omega,T)$ is of rather limited use in the present paper;
see Remark \ref{remark: main theorem} (2) and \S \ref{section: example} below.

If $\Omega$ is $T$-invariant (in particular, if $\Omega=X$), then
$\log\#^w\left(\Omega,n,\varepsilon\right)$ is subadditive in $n$.
Consequently, the limit
\[
\lim_{n\to\infty}
\frac{\log\#^w\left(\Omega,n,\varepsilon\right)}{n}
\]
exists, and $\h^w(\Omega,T)=\hinf^w(\Omega,T)$.

For $\Omega=X$, the author established the corresponding variational
principle in \cite[Theorem 1.3]{Tsukamoto}:
\begin{equation}
\label{eq: variational principle for covering weighted entropy}
\h^w(X,T)
=
\sup_{\mu\in\mathscr{M}^T(X)}
\left\{wh_\mu(X,T)+(1-w)h_{\pi_*\mu}(Y,S)\right\}.
\end{equation}

Although $\FHh^w(X,T)$ and $\h^w(X,T)$ are defined in very different
ways, the two variational principles
\eqref{eq: Feng--Huang variational principle} and
\eqref{eq: variational principle for covering weighted entropy}
show that they coincide:
\begin{equation}
\label{eq: coincidence of two weighted topological entropy}
\FHh^w(X,T)=\h^w(X,T).
\end{equation}
This identity is quite surprising in view of the definitions.
It also has useful applications.
For example, the Hausdorff dimension formula for Bedford--McMullen
carpets \cite{Bedford, McMullen} follows readily from
\eqref{eq: coincidence of two weighted topological entropy};
see \cite[Example 1.6]{Tsukamoto}.
In this sense, \eqref{eq: coincidence of two weighted topological entropy}
can be viewed as a \textit{topological generalization} of the
Hausdorff dimension formula for Bedford--McMullen carpets.

We now turn to arbitrary subsets $\Omega\subset X$ and compare
$\FHh^w(\Omega,T)$ with $\h^w(\Omega,T)$.
This setting arises naturally in the study of relative weighted
variational principles \cite{Yin,Alibabaei--Tsukamoto}.
A primary motivation for the present work is to gain a better
understanding of these relative versions; see
\S\ref{subsection: relative weighted variational principles} below.

For a general subset $\Omega\subset X$, the equality
\[
\FHh^w(\Omega,T)=\h^w(\Omega,T)
\]
need not hold.
Nevertheless, one direction of this equality remains valid in
full generality.
Our main result is the following.

\begin{theorem}
\label{theorem: main theorem}
Let $0\leq w\leq1$, and let
$\pi\colon(X,T)\to(Y,S)$ be an equivariant continuous map between dynamical systems.
Then, for every subset $\Omega\subset X$, we have
\[
\FHh^w(\Omega,T)\leq\h^w(\Omega,T).
\]
\end{theorem}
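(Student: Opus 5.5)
The plan is a direct, measure-free covering argument. Fix $\varepsilon>0$ and $s>\h^w(\Omega,T)$, and pick $s^\prime$ with $\h^w(\Omega,T)<s^\prime<s$. By definition \eqref{eq: covering weighted topological entropy}, for all sufficiently large $n$, and for the relevant small scale (first replacing $\varepsilon$ by a smaller $\varepsilon^\prime$ so that sets of $\mathbf{d}_n$- or $\mathbf{d}^\prime_n$-diameter $<\varepsilon^\prime$ sit inside Bowen balls of radius $\varepsilon$), we have $\#^w(\Omega,n,\varepsilon^\prime)<e^{s^\prime n}$. The goal is to show $\Lambda^{w,s}_{N,\varepsilon}(\Omega)$ stays bounded as $N\to\infty$. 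That gives $\FHh^w(\Omega,T,\varepsilon)\leq s$, and letting $s\downarrow\h^w(\Omega,T)$ and then $\varepsilon\to0$ finishes the proof. The point of the limsup is that the bound holds at \emph{every} large $n$, so we may use several times $n$ simultaneously. I expect that this, and not a single-scale argument, is the mechanism.

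The naive single-scale attempt is instructive but fails. Take an optimal cover $V_1,\dots,V_\ell$ of $\pi(\Omega)$ at time $n$, and let $N_j=\#(\Omega\cap\pi^{-1}(V_j),\mathbf{d}_n,\varepsilon^\prime)$. Each fiber piece can then be covered by $N_j$ weighted balls $B^w_n(x,\varepsilon)$, at total cost $\sum_j N_j e^{-sn}$ rather than $\sum_j N_j^w e^{-sn}$. The exponent $w$ must instead come from \emph{lengthening} the time. A weighted ball at time $m$ only controls $X$ up to time $\lceil wm\rceil$, but it controls $Y$ up to time $m$.

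I therefore intend to use a chain of times $n=n_0<n_1<\dots<n_L$ with $\lceil w n_{k+1}\rceil\approx n_k$. At level $k+1$, the $Y$-cover at time $n_{k+1}$ is combined with the $X$-cover at time $n_k$. Then the $X$-sets of a time-$n_k$ cover, intersected with the $Y$-sets of a time-$n_{k+1}$ cover, are contained in weighted balls $B^w_{n_{k+1}}(x,\varepsilon)$. For each piece of the $Y$-partition, I choose the cheaper of two options:
\begin{itemize}
\item stop at the current level and pay $N e^{-s n_k}$;
\item refine to the next level.
\end{itemize}
The decision rule is dictated by the elementary tradeoff $\min(a,\,a^w b)$. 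This produces a cover with variable times $n_i\geq n\geq N$, as the Feng--Huang definition allows. An induction on $L$, using $\#^w(\Omega,n_k,\varepsilon^\prime)<e^{s^\prime n_k}$ at each level, should bound its total cost by $\sum_k e^{(s^\prime-s)n_k}\cdot C$ up to errors from $\lceil\cdot\rceil$. The geometric growth $n_k\approx w^{-k}n$ makes this sum $O(1)$. For $w=0$ or $w=1$ the statement reduces to the classical inequality between Bowen entropy and upper capacity entropy, and I treat those cases separately.

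The main obstacle is this multiscale bookkeeping. The two-step covers at different times are not nested, so I must intersect them and control the multiplicities. The key step is the inequality showing that the greedy choice between ``stop'' and ``refine'' yields a cost comparable to $\sum_j N_j^w$, essentially a discrete H\"older-type interpolation between the $X$-count at time $wn$ and the $Y$-count at time $n$. If a direct induction fails, my fallback is the strategy Feng and Huang used for $\Omega=X$: expand the exponent as an iterated composition over $\lceil\log_{1/w}\rceil$-many scales, and verify that each step uses only covering numbers of $\Omega$ itself (never $T$-invariance), so that it remains valid for arbitrary $\Omega$.
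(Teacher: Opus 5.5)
Your route is genuinely different from the paper's. For general $\Omega$ the paper argues dually: it builds a finitary Frostman measure by LP duality, works with Shannon entropy of nested partitions, and runs a \emph{downward} scale-descent from a large time $L$. You instead try to build an efficient Feng--Huang cover directly from two-step covers at an \emph{increasing} chain of times. Your starting observation is correct and important. Because $\h^w$ uses the limit superior, $\#^w(\Omega,n_k,\varepsilon')<e^{s'n_k}$ holds at every $n_k$, and each such cover covers $\Omega$ itself, so non-invariance causes no trouble.

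However, there is a genuine gap at the two steps you flag as the main obstacle, and these steps carry the whole argument.

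\textbf{Termination.} Your bound $\sum_k e^{(s'-s)n_k}C$ only charges the pieces at which you stop. Nothing shows that every point of $\Omega$ is stopped within the chain. The rule ``take the cheaper of stop and refine'' is circular, since the cost of refining is exactly what must be bounded, and the tradeoff $\min(a,a^wb)$ is never made precise. What is needed is an explicit stopping rule together with a potential argument. In a nested (e.g.\ symbolic) situation this works as follows:
\begin{itemize}
\item For a $Y$-piece $\beta$ at time $n_k$, let $u(\beta)$ be the number of time-$n_{k-1}$ $X$-pieces over $\beta$, and let $t(\beta)$ be the number of time-$n_k$ $X$-pieces over $\beta$. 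Stop at $\beta$ iff $u(\beta)\le t(\beta)^w e^{\delta n_k}$.
\item The stopped pieces at level $k$ then cost about $e^{(s'+\delta-s)n_k}$ in total.
\item If the pieces $\beta_k\ni\pi(x)$ never stop, then $t(\beta_{k-1})\ge u(\beta_k)>t(\beta_k)^w e^{\delta n_k}$. Hence $c_k=\frac1{n_k}\log t(\beta_k)\in[0,C]$ decreases by roughly $\delta$ per level, which is impossible for more than about $2C/\delta$ levels.
\end{itemize}
This is the telescoping behind Lemma \ref{lemma: infinite path} and Lemma \ref{lemma: scale-descent inequality}, run upward, and it is the analogue of the cut-set in \S\ref{section: proof of the main theorem for Omega = X}. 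Your proposal contains no counterpart of it.

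\textbf{Nesting.} The step $t(\beta_{k-1})\ge u(\beta_k)$ requires $\beta_k\subset\beta_{k-1}$. Near-optimal open covers of $\pi(\Omega)$ at times $n_{k-1}$ and $n_k$ are unrelated. A time-$n_k$ set can meet many (in general exponentially many) time-$n_{k-1}$ sets, and intersecting the covers multiplies the counts by this multiplicity. That destroys both the potential argument and the cost bound. This is precisely the boundary problem the paper handles with Lemma \ref{lemma: Partition families with controlled exceptional sets}; saying ``control the multiplicities'' names it but gives no mechanism.

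One measure-free way to close this gap is as follows:
\begin{itemize}
\item Replace the $Y$-covers by atoms of $\mathcal Q^n$ for a fixed finite partition $\mathcal Q$; these atoms are nested in $n$.
\item Compare $\sum_{\beta\in\mathcal Q^n}t(\beta)^w$ with $\#^w(\Omega,n,\rho)$, up to a factor $q^{v}$, where $v$ is the number of boundary visits.
\item Without a measure, no single partition is good for every point. So split $\Omega$ into the finitely many sets of points whose orbits up to the top time $n_K$ visit the exceptional set at most $\eta n_K$ times. By the averaging in condition (3), every point lies in one of these sets.
\item Take $\eta\ll w^K$, so that the loss $e^{O(\eta n_K)}\le e^{O(\eta w^{-K}n_k)}$ is absorbed. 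This requires $K$ to be fixed in advance, so it again rests on the termination bound.
\end{itemize}
With both ingredients your upward scheme does go through and yields a measure-free alternative to the paper's proof. As written, though, the decisive inequality is only asserted.

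Your fallback does not help either. Feng and Huang's argument for $\Omega=X$ is measure-theoretic: it runs through their variational principle and a Frostman lemma proved with Hahn--Banach and Riesz. It is not a covering argument that can be transplanted to arbitrary $\Omega$.
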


Here $\Omega$ is not assumed to be either $T$-invariant or closed; 
see also Remark \ref{remark: main theorem} (1) below.

An application of Theorem \ref{theorem: main theorem} is discussed in
\S\ref{subsection: relative weighted variational principles}.
Before turning to this application, we highlight a methodological
feature of the proof: it does not rely on measure theory.
Our aim in adopting this approach is to isolate the topological and
combinatorial structure underlying the theorem.

The motivation for this approach comes from the contrast between
the definitions and the existing proof of their equivalence.
Both $\FHh^w(X,T)$ and $\h^w(X,T)$ are topological invariants, yet
the proof in \cite{Tsukamoto} of the identity
\[
\FHh^w(X,T)=\h^w(X,T)
\]
relies on the two variational principles
\eqref{eq: Feng--Huang variational principle} and
\eqref{eq: variational principle for covering weighted entropy}.
It is therefore natural to ask whether this identity admits
a direct proof that does not use measure theory.
This question was posed explicitly in \cite[Problem 1.5]{Tsukamoto}.

Our proof of Theorem \ref{theorem: main theorem} provides a partial
answer to this question.
It shows that one direction of the identity, namely,
\[
\FHh^w(X,T)\leq\h^w(X,T),
\]
can be established by purely topological means.

We first prove Theorem \ref{theorem: main theorem} in the special
case $\Omega=X$ in
\S\ref{section: proof of the main theorem for Omega = X},
and then prove the general case in
\S\ref{section: proof of the general case of the main theorem}.
The proof of the general case does not rely on the argument
for $\Omega=X$.
We nevertheless devote a separate section to this special case,
since its proof is substantially simpler and interesting in its
own right.
Moreover, examining this simpler setting helps clarify where
the essential difficulties of the general case arise.

We would like to clarify what we mean by 
\lq\lq{}proofs without using measure theory\rq\rq{}.
In the case $\Omega=X$, 
we will give a proof that does not use any 
measure-theoretic notions or arguments.
Thus, it is \lq\lq{}measure-free\rq\rq{} in the literal sense.
For a general subset $\Omega\subset X$, 
we will use some notions of elementary probability theory, 
such as Shannon entropy. 
However, this is done only for convenience of exposition. 
Every probability measure appearing in our proof is finitely supported 
and has the form
\[
\mu=\sum_{i=1}^n a_i\delta_{x_i},
\qquad
a_i\geq0,\quad \sum_{i=1}^n a_i=1,
\]
where $\{x_1,\dots,x_n\}$ is a finite subset of $X$, 
$\delta_{x_i}$ denotes the Dirac measure at $x_i$, 
and $(a_1,\dots,a_n)\in\mathbb R^n$ is a probability vector. 
In particular, we never take any limit of these measures.
Thus, the use of probability measures here is essentially finite-dimensional: 
all the relevant information is encoded by a finite subset of $X$ 
together with a finite-dimensional probability vector. 
If desired, the entire proof could therefore be reformulated in terms of finite subsets of $X$, 
probability vectors, 
and elementary finite-dimensional arguments, without introducing measures at all. 
Such a reformulation, however, would be less intuitive and considerably harder to read. 
For this reason, we use the language of elementary probability theory in the proof.

\begin{remark} \label{remark: main theorem}
We make a few remarks to clarify the statement of Theorem \ref{theorem: main theorem}.
  \begin{enumerate}
   \item[(1)] In Theorem \ref{theorem: main theorem}, 
   we assume neither that $\Omega\subset X$ is $T$-invariant 
   nor that it is closed; $\Omega$ is an arbitrary subset of $X$.
   At first sight, allowing such completely general subsets may seem somewhat unusual or unnecessarily general.
   However, as far as closedness is concerned, this generality is largely inessential.
   Let us denote the closure of $\Omega$ by $\overline{\Omega}$. 
   It is not difficult to see that 
   \[ \FHh^w(\Omega, T) \leq \FHh^w\left(\overline{\Omega}, T\right), \quad 
       \h^w(\Omega, T) = \h^w\left(\overline{\Omega}, T\right). \] 
    In particular, if we prove the inequality $\FHh^w\left(\overline{\Omega}, T\right) \leq \h^w\left(\overline{\Omega}, T\right)$,
    then we also have $\FHh^w(\Omega, T) \leq \h^w(\Omega, T)$.
    Thus, although the theorem is stated for arbitrary subsets of $X$, 
    allowing nonclosed subsets does not introduce any essentially new difficulty.
    We will not use this reduction in the actual proof, 
    but it may help make clear that the formulation for arbitrary subsets is not substantially more general than the closed case.
    In contrast, the non-$T$-invariant setting is a genuinely new feature of the problem 
    and gives rise to difficulties absent in the invariant case.
    \item[(2)] The two quantities $\FHh^w(\Omega,T)$ and $\h^w(\Omega,T)$ need not be equal.
    Here we clarify the possible order relations among the following three quantities:
    \[ \FHh^w(\Omega, T), \quad \h^w(\Omega, T), \quad \hinf^w(\Omega, T). \]
    We always have 
    \[ \FHh^w(\Omega, T)  \leq \h^w(\Omega, T), \quad \hinf^w(\Omega, T) \leq \h^w(\Omega, T). \]
    The first inequality is Theorem \ref{theorem: main theorem}, while the second follows immediately from the definitions.
    It turns out that, in general, there are no further universal order relations among these three quantities.
    First, consider the case
    \[ X = Y= \mathbb{R}/\mathbb{Z}, \quad T(x)= S(x) = 2x \]
    with $\pi$ equal to the identity map.
    In this case, $\FHh^w(\Omega, T)/\log 2$ 
    is equal to the Hausdorff dimension of $\Omega$, whereas
    $\h^w(\Omega, T)/\log 2$ and $\hinf^w(\Omega, T)/\log 2$ 
    are the upper and lower Minkowski dimensions of $\Omega$,
    respectively. 
    It is then elementary to construct a closed and noninvariant subset 
    $\Omega \subset X$ that satisfies 
    the strict inequalities
    \[  \FHh^w(\Omega, T) < \hinf^w(\Omega, T) < \h^w(\Omega, T). \]
     For this particular map $\pi$, 
     we always have $\FHh^w(\Omega, T) \leq  \hinf^w(\Omega, T)$.
     With a little more effort, however, 
     one can also construct an equivariant continuous map
     $\pi\colon (X, T)\to (Y, S)$ and a closed and noninvariant 
     subset $\Omega \subset X$ such that, for some weight $w\in (0,1)$,
     \begin{equation} \label{eq: lower version is irrelevant, introduction}
     \hinf^w(\Omega, T)  < \FHh^w(\Omega, T) < \h^w(\Omega, T).  
     \end{equation}
     In particular, we cannot replace $\h^w(\Omega, T)$ by $\hinf^w(\Omega, T)$
     in the statement of Theorem \ref{theorem: main theorem}.
     Therefore, the limsup version $\h^w(\Omega, T)$, 
     rather than $\hinf^w(\Omega, T)$, 
     is the relevant quantity in the context of the present paper.
     The example satisfying \eqref{eq: lower version is irrelevant, introduction}
     is constructed in \S \ref{section: example}.
  \end{enumerate}
\end{remark}

\subsection{Relative weighted variational principles} 
\label{subsection: relative weighted variational principles}

The standard variational principle for topological entropy has
a relative version \cite{Ledrappier--Walters}.
Similarly, the two weighted variational principles
\eqref{eq: Feng--Huang variational principle} and
\eqref{eq: variational principle for covering weighted entropy}
reviewed in \S\ref{subsection: Background and main result}
have been extended to the relative setting.
In this subsection, we explain how
Theorem \ref{theorem: main theorem} clarifies the relationship
between these relative variational principles.
This application is a major motivation for the present work.

Let $(X,T)$, $(Y,S)$, and $(Z,R)$ be dynamical systems,
and let $\pi\colon X\to Y$, $\rho\colon X\to Z$, and
$\theta\colon Y\to Z$ be equivariant continuous maps satisfying
$\rho=\theta\circ\pi$.
Assume that $\rho$ is surjective.
\[
\begin{tikzcd}
(X,T) \arrow[r,"\pi"] \arrow[dr, two heads, "\rho"'] & (Y,S) \arrow[d,"\theta"] \\
& (Z,R)
\end{tikzcd}
\]
Let $\nu\in \mathscr{M}^R(Z)$ be an $R$-invariant Borel probability measure on $Z$.
In this setting, Yin \cite[Theorem 2.9]{Yin} proved 
\begin{equation} \label{eq: relative variational principle for covering weighted entropy}
  \begin{split}
   & \int_Z \h^w\left(\rho^{-1}(z),\pi, T\right) \, d\nu(z) \\
   & = \sup\left\{w h_\mu(T|R) + (1-w) h_{\pi_*\mu}(S|R) \middle|\, 
    \mu \in \mathscr{M}^T(X)  \text{ with }\rho_*\mu = \nu\right\}.
   \end{split}
\end{equation}
Under the same assumptions, Alibabaei and the author
\cite{Alibabaei--Tsukamoto} proved
\begin{equation} 
   \label{eq: relative variational principle for Feng--Huang weighted entropy}
  \begin{split}
   & \int_Z \FHh^w\left(\rho^{-1}(z),\pi, T\right) \, d\nu(z) \\
   & = \sup\left\{w h_\mu(T|R) + (1-w) h_{\pi_*\mu}(S|R) \middle|\, 
    \mu \in \mathscr{M}^T(X)  \text{ with }\rho_*\mu = \nu\right\}.
   \end{split}
\end{equation}    
Here $\h^w\left(\rho^{-1}(z),\pi, T\right)$ and $\FHh^w\left(\rho^{-1}(z),\pi, T\right)$ 
are the covering and Feng--Huang 
$w$-weighted topological entropies of the set $\rho^{-1}(z)\subset X$ 
with respect to the map $\pi\colon (X,T) \to (Y,S)$, respectively.
The quantities $h_\mu(T|R)$ and $h_{\pi_*\mu}(S|R)$ 
denote the conditional Kolmogorov--Sinai entropies.

The two relative variational principles 
\eqref{eq: relative variational principle for covering weighted entropy} and
\eqref{eq: relative variational principle for Feng--Huang weighted entropy}
have the same right-hand side.
Applying them on ergodic components yields the following
almost-everywhere identity; see
\cite[Corollary 2.3]{Alibabaei--Tsukamoto}:
\begin{equation} \label{eq: equivarence of relative versions}
  \FHh^w\left(\rho^{-1}(z),\pi, T\right)  = \h^w\left(\rho^{-1}(z),\pi, T\right) \quad \text{for $\nu$-a.e. } z\in Z. 
\end{equation}  
This is the relative version of the identity \eqref{eq: coincidence of two weighted topological entropy}
reviewed in \S \ref{subsection: Background and main result}.
Notice that $\rho^{-1}(z)$ is not $T$-invariant in general. 
Therefore \eqref{eq: equivarence of relative versions} 
does not follow from \eqref{eq: coincidence of two weighted topological entropy}.

The relative variational principles 
\eqref{eq: relative variational principle for covering weighted entropy} and
\eqref{eq: relative variational principle for Feng--Huang weighted entropy}
are not only of theoretical interest but also have a concrete application to fractal geometry.
Using the identity \eqref{eq: equivarence of relative versions}, 
we can compute the Hausdorff dimension of 
the intersection of two randomly translated Bedford--McMullen carpets; 
see \cite[Theorem 2.4]{Alibabaei--Tsukamoto}.
This extends the pioneering work of Kenyon and Peres \cite{Kenyon--Peres_intersection} 
from the self-similar setting to the self-affine setting.

Although the relative versions 
\eqref{eq: relative variational principle for covering weighted entropy} and
\eqref{eq: relative variational principle for Feng--Huang weighted entropy} 
are very interesting, 
their proofs are rather lengthy and are based on substantially different ideas.
The application to fractal geometry requires both versions; neither
one alone is sufficient.
To understand the existing derivation in full, one must therefore
work through both proofs.

The present work sheds new light on this issue.
By Theorem \ref{theorem: main theorem}, we have
\begin{equation}
\label{eq: comparison of two fiber entropies}
\FHh^w\left(\rho^{-1}(z),\pi,T\right)
\leq
\h^w\left(\rho^{-1}(z),\pi,T\right)
\qquad
\text{for every $z\in Z$}.
\end{equation}
Therefore, in order to establish both
\eqref{eq: relative variational principle for covering weighted entropy} and
\eqref{eq: relative variational principle for Feng--Huang weighted entropy},
it is enough to prove the upper bound
\begin{equation*}
\begin{split}
&\int_Z
\h^w\left(\rho^{-1}(z),\pi,T\right)\,d\nu(z)\\
&\leq
\sup\left\{
w h_\mu(T\mid R)
+
(1-w)h_{\pi_*\mu}(S\mid R)
\ \middle|\
\mu\in\mathscr{M}^T(X),\ \rho_*\mu=\nu
\right\},
\end{split}
\end{equation*}
and the lower bound
\begin{equation*}
\begin{split}
&\int_Z
\FHh^w\left(\rho^{-1}(z),\pi,T\right)\,d\nu(z)\\
&\geq
\sup\left\{
w h_\mu(T\mid R)
+
(1-w)h_{\pi_*\mu}(S\mid R)
\ \middle|\
\mu\in\mathscr{M}^T(X),\ \rho_*\mu=\nu
\right\}.
\end{split}
\end{equation*}
Indeed, combining these two inequalities with
\eqref{eq: comparison of two fiber entropies}, we obtain
\begin{equation*}
  \begin{split}
  &\sup\left\{
w h_\mu(T\mid R)
+
(1-w)h_{\pi_*\mu}(S\mid R)
\ \middle|\
\mu\in\mathscr{M}^T(X),\ \rho_*\mu=\nu
\right\} \\
&\leq 
\int_Z
\FHh^w\left(\rho^{-1}(z),\pi,T\right)\,d\nu(z) \\
&\leq
\int_Z
\h^w\left(\rho^{-1}(z),\pi,T\right)\,d\nu(z) \\
&\leq \sup\left\{
w h_\mu(T\mid R)
+
(1-w)h_{\pi_*\mu}(S\mid R)
\ \middle|\
\mu\in\mathscr{M}^T(X),\ \rho_*\mu=\nu
\right\},
  \end{split}
\end{equation*}  
and hence all the quantities are equal to each other.

Thus, instead of proving both directions of each variational
principle independently,
it suffices to prove one inequality for each of them.
The remaining two inequalities are supplied simultaneously 
by Theorem \ref{theorem: main theorem}.
In other words, Theorem \ref{theorem: main theorem} 
allows us to replace \lq\lq{}half\rq\rq{} 
of the proofs of the two relative variational principles.
Notice that, for this application, it is crucial that Theorem \ref{theorem: main theorem}
deals with noninvariant subsets $\Omega\subset X$ because the fibers
$\rho^{-1}(z)$ are not $T$-invariant in general.

We do not claim that this approach yields better proofs of the
two relative variational principles.
The original proofs have their own advantages, and our proof of
Theorem \ref{theorem: main theorem} 
is itself nontrivial.\footnote{Indeed,
our proof is strongly influenced by Feng and Huang's proof of
their variational principle \eqref{eq: Feng--Huang variational principle}.}
Nevertheless, we believe that this approach provides a new
perspective on these foundational results and helps clarify
their underlying structure.

\subsection{Use of generative AI}
This paper grew out of intensive use of ChatGPT (GPT-5.6 and 6).
Almost all the ideas underlying the proofs were generated by ChatGPT.
In particular, the ideas of using a flow construction in
\S\ref{section: proof of the main theorem for Omega = X}
and applying linear-programming duality in
\S\ref{section: proof of the general case of the main theorem}
were generated solely by ChatGPT.

The author's role was to formulate a sequence of questions,
examine the resulting ideas, and streamline the arguments.
All AI-generated suggestions and outputs were checked,
verified, and edited by the author.
The author takes full responsibility for all mathematical statements
and proofs in this paper.

\section{Proof of Theorem \ref{theorem: main theorem} for $\Omega = X$}
\label{section: proof of the main theorem for Omega = X}

In this section, we prove a special case of Theorem \ref{theorem: main theorem}, 
namely the case $\Omega=X$.
The general case will be treated in 
\S\ref{section: proof of the general case of the main theorem}.
The proof of the general case does not use the argument in this section,
so readers may proceed directly to that proof.
Nevertheless, 
the argument for $\Omega=X$ is substantially simpler 
and helps clarify the essential difficulties that arise in the general setting.
For this reason, we present this special case separately.

Let $\pi\colon (X, T)\to (Y, S)$ be an equivariant continuous map between dynamical systems.
Let $0\leq w \leq 1$. We will prove 
\[  \FHh^w(X, T) \leq \h^w(X, T). \]
We can assume that $\pi$ is surjective 
because otherwise we can replace $Y$ by $\pi(X)$ and $S$ by the restriction of $S$ to $\pi(X)$.
We take metrics $\mathbf{d}$ and $\mathbf{d}^\prime$ on $X$ and $Y$, respectively.

We first consider the endpoint case $w=0$. 
In this case, we have 
\[ B^0_n(x, \varepsilon) 
= \pi^{-1}\{y \in Y : \mathbf{d}^\prime_n(y, \pi(x)) < \varepsilon\}, \]
and $\h^0(X, T) = h_{\text{top}}\left(Y, S\right)$ 
is the topological entropy of $(Y, S)$.
A cover of $Y$ by sets of $\mathbf d'_n$-diameter less than
$\varepsilon$ gives a cover of $X$ by the same number of
$0$-weighted Bowen balls of length $n$ and radius $\varepsilon$.
Therefore
\[
\FHh^0(X,T)\leq\h^0(X,T).
\]
Hence, in the remainder of this section, we assume $0 <w \leq 1$.

The desired inequality is immediate if $\h^w(X,T)=\infty$.
We therefore assume that $\h^w(X,T)<\infty$.
Fix $s>\h^w(X,T)$. It suffices to prove that
$\FHh^w(X,T)\leq s$.

Take any $N_0 \in \mathbb{N}$ and $\varepsilon \in (0,1)$.
Since $\h^w(X, T) < s$, we can choose $N\geq N_0$ and open covers 
\begin{itemize}
  \item $Y= V_1\cup \dots \cup V_a$ with 
  $\diam(V_i, \mathbf{d}^\prime_N) < \varepsilon$ for all $1\leq i \leq a$, 
  \item $\pi^{-1}(V_i) = U_{i 1}\cup U_{i 2} \cup \dots \cup U_{i t_i}$ 
  with $\diam(U_{ij}, \mathbf{d}_N) < \varepsilon$,
\end{itemize}
such that 
\[ Z := \sum_{i=1}^a t_i^w < e^{sN}. \]
We discard any empty $V_i$.
Since $\pi$ is surjective, we then have $t_i\geq1$ for every $i$.
From these data, we will construct a rooted tree and a \lq\lq{}flow\rq\rq{} on it.
The construction is reminiscent of the flow-based proof of
Frostman's lemma in \cite[\S 3.1]{Bishop--Peres}\footnote{Frostman's lemma is a 
classical result in geometric measure theory 
that provides a criterion for the existence of measures with prescribed dimensional properties. 
Therefore, although the present proof is logically \lq\lq{}measure-free\rq\rq{},
it is inspired by a measure-theoretic argument.}.

We define a rooted tree $\Gamma$ as follows.
 \begin{itemize}
   \item The root of $\Gamma$ is denoted by $(\emptyset)$ and is regarded as the unique vertex of level $0$.
   For each integer $q\geq 1$, a vertex of level $q$ is a tuple of the form
   \[ v= (i_1,\dots,i_q;j_1,\dots,j_{\lceil wq\rceil}), \]
   where $i_1,\dots,i_q\in \{1,\dots,a\}$ and $j_k\in \{1,\dots,t_{i_k}\}$ ($1\leq k\leq\lceil wq\rceil$).  
   The vertex set of $\Gamma$ consists of the root $(\emptyset)$ together with all vertices of levels $q\geq1$.
   \item For each vertex $v = (i_1,\dots,i_q;j_1,\dots,j_{\lceil wq\rceil})$ of level $q\geq 1$, we define its \textit{parent} by 
   \[ p(v) =  (i_1,\dots,i_{q-1};j_1,\dots,j_{\lceil w(q-1)\rceil}). \]
   This is a vertex of level $q-1$. If $q=1$, we set $p(v) = (\emptyset)$.
   We join $p(v)$ to $v$ by an edge $p(v)\rightarrow v$. 
 \end{itemize}
 Notice that, in passing from a vertex $v$ to its parent $p(v)$, 
 the $i$-word always loses its last symbol, 
 whereas the $j$-word loses its last symbol only when
$\lceil w(q-1)\rceil=\lceil wq\rceil-1$.
Thus, if $\lceil w(q-1)\rceil=\lceil wq\rceil$, the $j$-word remains unchanged.

A \textbf{flow} on $\Gamma$ is a nonnegative function $f$ 
defined on the set of edges that satisfies 
\[ f(p(v) \rightarrow v) = \sum_{u: p(u) = v} f(v\rightarrow u) \]
for all vertices $v$ of level $q\geq 1$. 
Here, the sum on the right-hand side is taken over 
all vertices $u$ of level $q+1$ with $p(u) = v$
(namely, all children $u$ of $v$).
One may visualize $\Gamma$ as a network of pipes through which water flows. 
We imagine that water is supplied from outside at the root 
and then flows down along the edges of the tree. 
The quantity $f(p(v)\to v)$ represents the amount of water flowing into $v$, 
while $f(v\to u)$ represents the amount flowing from $v$ to its child $u$. 
Thus, the above defining identity for a flow simply expresses that, 
at each vertex of level $q\geq 1$, 
the total amount of incoming water is equal to the total amount of outgoing water.

For a flow $f$ on $\Gamma$, we set 
\[ \norm{f} = \sum_{v:\text{ level $1$}} f\left((\emptyset)\rightarrow v\right). \]
Here the sum is taken over all vertices $v$ of level $1$.
Thus, $\norm{f}$ represents the total amount of water 
supplied from outside at the root.
We call $f$ a \textbf{unit flow} if $\norm{f} = 1$.

For a flow $f$ and a vertex $v$ of level $q\geq 1$, we set 
\[ f_{\mathrm{in}}(v) = f\left(p(v)\rightarrow v\right). \]
This represents the amount of water flowing into the vertex $v$.

Now we define a unit flow $f$ on $\Gamma$ inductively with respect to the level:
\begin{enumerate}
  \item[(1)] Let $v = (i; j)$ be a vertex of level $1$. We define 
  \[ f\left((\emptyset)\rightarrow v\right) = \frac{t_i^{w-1}}{Z}. \]
  By the definition of $Z$, we have $\norm{f} = 1$. 
  So the flow $f$ constructed below will be a unit flow.
  \item[(2)] Let $v = (i_1, \dots, i_q; j_1, \dots, j_{\lceil wq \rceil})$ 
  be a vertex of level $q\geq 2$. 
  Suppose that the flow has already been defined up to level $q-1$.
  If $\lceil w(q-1)\rceil = \lceil w q\rceil$, then we define 
  \[ f\left(p(v)\rightarrow v\right) 
  = \frac{t_{i_q}^w}{Z}\cdot f_{\mathrm{in}}\left(p(v)\right). \]
  If $\lceil w(q-1)\rceil = \lceil w q\rceil-1$, then we define
  \[ f\left(p(v)\rightarrow v\right) 
  = \frac{t_{i_q}^w}{t_{i_{\lceil wq \rceil}} Z} 
  \cdot f_{\mathrm{in}}\left(p(v)\right). \]
\end{enumerate}

It is straightforward to 
check that the above inductive construction 
satisfies the flow conservation condition at every vertex of level $q\geq 1$.
Hence it defines a unit flow $f$ on $\Gamma$.
Explicitly, for $v = (i_1, \dots, i_q; j_1, \dots, j_{\lceil wq \rceil})$
\[ f_{\mathrm{in}}(v) 
= \prod_{k=1}^q \frac{t_{i_k}^w}{Z} \cdot \prod_{k=1}^{\lceil wq \rceil} \frac{1}{t_{i_k}}. \]
Set
\[ g(v) =  
w\left(\sum_{k=1}^q \log t_{i_k}\right) - \left(\sum_{k=1}^{\lceil wq \rceil} \log t_{i_k}\right). \]
Then
\begin{equation} \label{eq: incoming flow formula}
  \log f_{\mathrm{in}}(v) = -q \log Z + g(v).
\end{equation}

We consider an infinite path starting at the root and going down the tree:
\[ \gamma\colon (\emptyset)\to v_1\to v_2\to\cdots, \]
where $v_q=(i_1,\dots,i_q;j_1,\dots,j_{\lceil wq\rceil})$ is a vertex of level $q$.
Such an infinite path is determined 
by the two sequences $i_1,i_2,\dots$ and $j_1,j_2,\dots$ that satisfy 
\[ i_k \in \{1,2,\dots, a\}, \qquad j_k \in \{1,2,\dots, t_{i_k}\}. \]
Therefore we also denote the path by
$\gamma=(i_1,i_2,\dots;j_1,j_2,\dots)$.

The next lemma is a key technical ingredient.

\begin{lemma} \label{lemma: infinite path}
Suppose that we are given an infinite path 
$\gamma = (i_1, i_2, \dots ; j_1, j_2, \dots)$ starting at the root, and
let $v_q = (i_1, \dots, i_q; j_1, \dots, j_{\lceil wq \rceil})$.
For every $\delta>0$, there exist infinitely many $q$ such that
\[ g(v_q) > -\delta q. \]
\end{lemma}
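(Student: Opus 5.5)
Throughout, write $a_k=\log t_{i_k}$ and $S_q=\sum_{k=1}^q a_k$. Since every $t_i\geq1$, we have $0\leq a_k\leq M:=\max_{1\leq i\leq a}\log t_i$. In this notation
\[
g(v_q)=wS_q-S_{\lceil wq\rceil}.
\]
Recall that we are in the case $0<w\leq1$. If $w=1$, then $\lceil wq\rceil=q$, so $g(v_q)=0>-\delta q$ for every $q$ and there is nothing to prove. So assume $0<w<1$. The plan is to argue by contradiction, using the averages $A(q)=S_q/q\in[0,M]$.

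Suppose that for some $\delta>0$ there is $Q_0$ such that $g(v_q)\leq-\delta q$ for all $q\geq Q_0$. This means
\[
S_{\lceil wq\rceil}\geq wS_q+\delta q \qquad (q\geq Q_0).
\]
Divide by $\lceil wq\rceil$ and use $wq\leq\lceil wq\rceil\leq wq+1$. This gives
\[
A(\lceil wq\rceil)\geq \frac{A(q)+\delta/w}{1+1/(wq)}\geq A(q)+\frac{\delta}{w}-\frac{M+\delta/w}{wq}.
\]
Enlarging $Q_0$ if necessary, we may assume that $(M+\delta/w)/(wQ_0)\leq\delta/(2w)$. Then every $q\geq Q_0$ satisfies
\[
A(\lceil wq\rceil)\geq A(q)+\frac{\delta}{2w}.
\]
In words, passing from $q$ to $\lceil wq\rceil$ raises the running average of the $a_k$ by a definite amount.

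Now iterate this downward. Fix a large $m$ and pick $q$ with $w^mq\geq Q_0$. Form the chain $q_0=q$ and $q_{r+1}=\lceil wq_r\rceil$. Since $q_r\geq w^rq\geq Q_0$ for $0\leq r\leq m$, the increment bound applies at each of the first $m$ steps. Therefore
\[
M\geq A(q_m)\geq A(q_0)+\frac{m\delta}{2w}\geq\frac{m\delta}{2w}.
\]
This is false once $m>2wM/\delta$. The contradiction shows that $g(v_q)>-\delta q$ holds for infinitely many $q$.

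The only delicate point is the error from the ceiling, namely the ratio $\lceil wq\rceil/(wq)$. This error is $O(1/q)$, which is why the whole chain must stay above a threshold $Q_0$. Because $w<1$ and $q$ may be taken arbitrarily large, chains of any prescribed length $m$ remain above $Q_0$, so the boundedness $a_k\leq M$ gives the contradiction.
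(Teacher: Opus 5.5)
Your proof is correct and follows essentially the same route as the paper: you argue by contradiction, pass to the averages $S_q/q\in[0,M]$, and show that replacing $q$ by $\lceil wq\rceil$ raises the average by a fixed positive amount for all large $q$, which is incompatible with boundedness. The differences are cosmetic. You close the argument by iterating a finite descending chain $q_{r+1}=\lceil wq_r\rceil$ rather than comparing the upper limits of $b_q$ and $b_{\lceil wq\rceil}$, and your separate treatment of $w=1$ is harmless but unnecessary.
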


\begin{proof}
We set $M = \max_{1\leq i \leq a} \log t_i$ and 
\[ a_q = \sum_{k=1}^q \log t_{i_k}, \quad (q\geq 1). \]
We have $g(v_q) = w a_q - a_{\lceil wq\rceil}$.

Suppose the conclusion is false.
Then there exists $q_0>0$ such that we have $g(v_q) \leq -\delta q$ for all $q\geq q_0$.
Equivalently
\[ a_{\lceil w q \rceil} \geq  w a_q + \delta q, \quad (q\geq q_0). \]
Set $b_q = a_q/q$. We have $0\leq b_q \leq M$.
For $q\geq q_0$
  \[
    b_{\lceil wq\rceil}  \geq  \frac{wq}{\lceil wq \rceil} b_q + \frac{\delta q}{\lceil wq\rceil} 
    = b_q - \left(1-\frac{wq}{\lceil wq\rceil}\right) b_q + \frac{\delta q}{\lceil wq\rceil}.
   \]
Since $1-\frac{wq}{\lceil wq\rceil} < \frac{1}{\lceil wq\rceil}$ and $\frac{q}{\lceil wq\rceil}\geq 1$, 
\[ b_{\lceil wq\rceil} \geq b_q  - \frac{M}{\lceil wq\rceil} + \delta. \] 
Then, for $q \geq \max\left(q_0, \frac{2M}{w\delta}\right)$
\[  b_{\lceil wq\rceil} \geq  b_q + \frac{\delta}{2}. \]

Since $\lceil wq\rceil\to\infty$ as $q\to\infty$,
taking upper limits gives
\[
\limsup_{q\to\infty}b_q
\geq
\limsup_{q\to\infty}b_{\lceil wq\rceil}
\geq
\limsup_{q\to\infty}b_q+\frac{\delta}{2}.
\]
This is impossible because $0\leq b_q\leq M$.
\end{proof}

Fix an arbitrary positive number $\delta$.
For each infinite path $\gamma = (i_1, i_2, \dots ; j_1, j_2, \dots)$, let $v_q = (i_1, \dots, i_q; j_1, \dots, j_{\lceil wq \rceil})$
and choose the smallest integer $q = q(\gamma)\geq 1 $ such that $g(v_q) > -\delta q$.
We define $\mathcal{C}$ as the set of vertices $v$ for which there exists an infinite path $\gamma = (i_1, i_2, \dots ; j_1, j_2, \dots)$
that satisfies $v = (i_1, \dots, i_{q(\gamma)}; j_1, \dots, j_{\lceil w q(\gamma) \rceil})$.

The integers $q(\gamma)$ are uniformly bounded over all infinite paths $\gamma$.
Indeed, if they were not uniformly bounded, a standard diagonal argument would produce an infinite path
$\gamma$ contradicting Lemma \ref{lemma: infinite path}.
It follows that $\mathcal{C}$ is a finite set.

The set $\mathcal{C}$ is a \textbf{cut-set}: every infinite path starting at the root meets $\mathcal{C}$.
Moreover, $\mathcal{C}$ is an \textbf{antichain}\footnote{In particular, $\mathcal{C}$ is minimal 
in the sense that no proper subset of $\mathcal{C}$ is a cut-set.}: 
no two distinct vertices of $\mathcal{C}$ lie on the same infinite path. 
This follows immediately from the definition of the first stopping time $q(\gamma)$.

\begin{claim} \label{claim: minimal cut-set}
 \[ \sum_{v\in \mathcal{C}} f_{\mathrm{in}}(v) = 1. \]
\end{claim}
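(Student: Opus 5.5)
We prove the claim by pushing the unit flow down to a common finite level and using conservation. Since $\mathcal{C}$ is finite, let $Q=\max_{v\in\mathcal{C}}\mathrm{level}(v)$, which is finite.

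The first step is to show that for every level $q\geq 1$,
\[
\sum_{u:\ \text{level } q} f_{\mathrm{in}}(u) = \norm{f} = 1 .
\]
We argue by induction on $q$. The case $q=1$ is the definition of $\norm{f}$. For the inductive step, group the level-$(q+1)$ vertices by their parent. The flow conservation at each level-$q$ vertex $v$ gives $f_{\mathrm{in}}(v)=\sum_{p(u)=v} f_{\mathrm{in}}(u)$. This requires knowing that every level-$q$ vertex has at least one child. That holds because $t_i\geq1$ for all $i$ (we discarded empty $V_i$ and $\pi$ is surjective). Hence every finite path extends to an infinite one.

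The same conservation argument shows, for a vertex $v$ of level $\ell\leq Q$,
\[
f_{\mathrm{in}}(v)=\sum_{u\in D_Q(v)} f_{\mathrm{in}}(u),
\]
where $D_Q(v)$ is the set of level-$Q$ descendants of $v$. This follows by induction on $Q-\ell$.

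The main point is then to show that the level-$Q$ vertices are partitioned as $\bigsqcup_{v\in\mathcal{C}} D_Q(v)$.

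\textbf{Covering.} Take any level-$Q$ vertex $u$ and extend it to an infinite path $\gamma$, which is possible since every vertex has a child. Because $\mathcal{C}$ is a cut-set, $\gamma$ passes through some $v\in\mathcal{C}$. That vertex lies at level $q(\gamma)\leq Q$, so $v$ is the ancestor of $u$ at that level, and $u\in D_Q(v)$.

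\textbf{Disjointness.} If $u\in D_Q(v)\cap D_Q(v')$ with $v\neq v'$, then $v$ and $v'$ both lie on the root-to-$u$ path. Extending that path to an infinite path contradicts the antichain property.

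Summing over $\mathcal{C}$ and using the two displays above,
\[
\sum_{v\in\mathcal{C}} f_{\mathrm{in}}(v)=\sum_{u:\ \text{level } Q} f_{\mathrm{in}}(u)=1 .
\]
The only delicate step is this partition, and it reduces to the cut-set and antichain properties together with the fact that every vertex has a child. The rest is bookkeeping with flow conservation.
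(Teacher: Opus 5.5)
Your proof is correct and follows essentially the same route as the paper. You push the flow down to a common level (you take $Q$ as the maximal level in $\mathcal{C}$, while the paper takes any $L$ exceeding all stopping times), write each $f_{\mathrm{in}}(v)$ as the sum over its descendants at that level, use the cut-set and antichain properties to partition that level, and use the fact that total inflow is the same at every level. Your explicit remark that every vertex has a child because $t_i\geq 1$ is needed to extend a level-$Q$ vertex to an infinite path; the paper uses this only implicitly.
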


\begin{proof}
Intuitively, this is an immediate consequence of flow conservation, but we provide the details for completeness.
Choose an integer $L$ so large that $L>q(\gamma)$ for all infinite paths $\gamma$.
Let $\mathcal{V}_L$ denote the set of all vertices of level $L$.
For each $v\in \mathcal{C}$, let $\mathcal{V}_L(v)$ be the set of all vertices $u\in\mathcal{V}_L$
for which $v$ is an ancestor of $u$; equivalently, $v$ lies on the path from the root to $u$.

Repeated application of the flow conservation rule gives
\[ f_{\mathrm{in}}(v) = \sum_{u\in \mathcal{V}_L(v)} f_{\mathrm{in}}(u).
\]
Since $\mathcal{C}$ is a cut-set, every vertex $u$ of level $L$ has an
ancestor belonging to $\mathcal{C}$. 
This ancestor is unique because $\mathcal{C}$ is an antichain. 
Therefore, the sets $\mathcal{V}_L(v)$ ($v\in\mathcal{C}$) form a partition of $\mathcal{V}_L$. 
It follows that
\[
\sum_{v\in\mathcal{C}} f_{\mathrm{in}}(v)
=
\sum_{v\in\mathcal{C}}\sum_{u\in \mathcal{V}_L(v)}f_{\mathrm{in}}(u)
=
\sum_{\substack{u\in \mathcal{V}_L}}f_{\mathrm{in}}(u).
\]
The flow conservation rule also implies that the total incoming flow
is the same at every level. Hence
\[ \sum_{\substack{u\in \mathcal{V}_L}}f_{\mathrm{in}}(u) = \sum_{u\in \mathcal{V}_1} f_{\mathrm{in}}(u) = \norm{f} = 1. \]
\end{proof}

We now return to the proof of $\FHh^w(X, T) \leq s$.
For each $v = (i_1, \dots, i_q; j_1, \dots, j_{\lceil wq \rceil}) \in \mathcal{C}$, 
we define an open subset $E_v$ of $X$ by
\[ E_v = 
\bigcap_{k=1}^{q} T^{-(k-1)N} \pi^{-1}(V_{i_k}) 
\cap \bigcap_{k=1}^{\lceil wq\rceil} T^{-(k-1)N} U_{i_k j_k}. \]

\begin{claim} \label{claim: diameter of E_v}
    Let $v\in \mathcal{C}$ be a vertex of level $q$. 
     Then the diameter of $E_v$ with respect to $\mathbf{d}^w_{qN}$ is smaller than $\varepsilon$.
\end{claim}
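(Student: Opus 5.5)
The plan is to check the two components of $\mathbf{d}^w_{qN}$ separately, for arbitrary $x,x'\in E_v$. Write $v=(i_1,\dots,i_q;j_1,\dots,j_{\lceil wq\rceil})$. By definition,
\[
\mathbf{d}^w_{qN}(x,x')=\max\left(\mathbf{d}_{\lceil wqN\rceil}(x,x'),\ \mathbf{d}'_{qN}(\pi(x),\pi(x'))\right).
\]
It suffices to bound each term by a quantity strictly less than $\varepsilon$ that is uniform over $E_v$. Since there are finitely many pieces, we can take a maximum of finitely many diameters, each strictly below $\varepsilon$, so the supremum is still strictly less than $\varepsilon$.

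\emph{The $Y$-component.} For $x\in E_v$ and $1\leq k\leq q$, we have $T^{(k-1)N}x\in\pi^{-1}(V_{i_k})$. Equivariance gives $S^{(k-1)N}\pi(x)=\pi(T^{(k-1)N}x)\in V_{i_k}$. Any time $0\leq m<qN$ can be written uniquely as $m=(k-1)N+r$ with $1\leq k\leq q$ and $0\leq r<N$. Then
\[
\mathbf{d}'(S^m\pi(x),S^m\pi(x'))\leq \diam(V_{i_k},\mathbf{d}'_N).
\]
Taking the maximum over $m$ gives
\[
\mathbf{d}'_{qN}(\pi(x),\pi(x'))\leq\max_k\diam(V_{i_k},\mathbf{d}'_N)<\varepsilon.
\]

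\emph{The $X$-component.} Here the key point is the inequality $\lceil wqN\rceil\leq\lceil wq\rceil N$, which holds because $\lceil wq\rceil N$ is an integer that is at least $wqN$. So the Bowen metric of length $\lceil wqN\rceil$ is dominated by the one of length $\lceil wq\rceil N$. For $1\leq k\leq\lceil wq\rceil$, both $T^{(k-1)N}x$ and $T^{(k-1)N}x'$ lie in $U_{i_kj_k}$, which has $\mathbf{d}_N$-diameter less than $\varepsilon$. The same block decomposition of times $0\leq m<\lceil wq\rceil N$ then gives
\[
\mathbf{d}_{\lceil wqN\rceil}(x,x')\leq\mathbf{d}_{\lceil wq\rceil N}(x,x')\leq\max_k\diam(U_{i_kj_k},\mathbf{d}_N)<\varepsilon.
\]
If $\lceil wqN\rceil=0$, this term is $0$ by the convention stated earlier. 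This cannot actually happen here, since $w>0$ and $q\geq1$.

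Combining the two bounds, $\diam(E_v,\mathbf{d}^w_{qN})<\varepsilon$. The only step requiring care is the rounding comparison $\lceil wqN\rceil\leq\lceil wq\rceil N$. This is exactly why the tree uses $\lceil wq\rceil$ $j$-symbols at level $q$: it gives enough $X$-information to control the $X$-part of the weighted metric at time scale $qN$. Note that none of the properties of $\mathcal{C}$ are used; the claim holds for every vertex of level $q$.
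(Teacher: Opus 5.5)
Your proposal is correct and follows essentially the same route as the paper. Both arguments bound the $Y$-part via $S^{(k-1)N}\pi(x)\in V_{i_k}$ and the $X$-part via $T^{(k-1)N}x\in U_{i_kj_k}$ together with $\lceil wq\rceil N\geq\lceil wqN\rceil$, then take the maximum of finitely many diameters, each less than $\varepsilon$. You make explicit the block decomposition of times $m=(k-1)N+r$ that the paper leaves implicit.
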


\begin{proof}
Let $v = (i_1, \dots, i_q; j_1, \dots, j_{\lceil wq \rceil}) \in \mathcal{C}$, and let $x, y\in E_v$.
Then 
\begin{align*}
   & S^{(k-1)N}\pi(x), S^{(k-1)N} \pi(y)  \in V_{i_k} \quad (1\leq k \leq q), \\
   & T^{(k-1)N} x, T^{(k-1)N} y  \in U_{i_k j_k} \quad (1\leq k \leq \lceil wq \rceil). 
\end{align*}   
It follows that 
\begin{align*}
  & \mathbf{d}^\prime_N\left(S^{(k-1)N}\pi(x), S^{(k-1)N} \pi(y)\right) 
  \leq \diam(V_{i_k}, \mathbf{d}^\prime_N) \quad (1\leq k \leq q), \\
  & \mathbf{d}_N\left(T^{(k-1)N}x, T^{(k-1)N}y\right) 
  \leq \diam(U_{i_k j_k}, \mathbf{d}_N)  
  \quad (1\leq k \leq \lceil wq \rceil). 
\end{align*} 
From the former inequality, 
we obtain 
\[ \mathbf{d}^\prime_{qN}\left(\pi(x), \pi(y)\right) 
\leq \max_{1\leq k \leq q} \diam(V_{i_k}, \mathbf{d}^\prime_N). \]
Since $\lceil wq \rceil N \geq \lceil wq N\rceil$, 
from the latter inequality, we also obtain 
\[ \mathbf{d}_{\lceil wq N\rceil}(x,y) 
\leq \max_{1\leq k \leq \lceil wq \rceil}
 \diam(U_{i_k j_k}, \mathbf{d}_N). \]
Therefore, 
\[ \mathbf{d}^w_{qN}(x,y) \leq 
\max\left\{\max_{1\leq k \leq q} \diam(V_{i_k}, \mathbf{d}^\prime_N),
\max_{1\leq k \leq \lceil wq \rceil} \diam(U_{i_k j_k}, \mathbf{d}_N)\right\}
< \varepsilon. \]
\end{proof}

\begin{claim} \label{claim: covering by a cut-set}
\[ X = \bigcup_{v\in \mathcal{C}} E_v. \]
\end{claim}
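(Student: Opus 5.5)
To prove Claim~\ref{claim: covering by a cut-set}, we show that every point $x\in X$ determines an infinite path $\gamma$ in $\Gamma$ with $x\in E_v$ for every vertex $v$ on $\gamma$. Since $\mathcal{C}$ is a cut-set, $\gamma$ meets $\mathcal{C}$, and the claim follows.

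Fix $x\in X$ and build the two sequences symbol by symbol. For each $k\geq 1$, the point $\pi(T^{(k-1)N}x)=S^{(k-1)N}\pi(x)$ lies in $Y=V_1\cup\dots\cup V_a$. Here the $V_i$ left after discarding the empty ones still cover $Y$, because discarding empty sets does not change the union. Choose $i_k$ with $S^{(k-1)N}\pi(x)\in V_{i_k}$, so that $T^{(k-1)N}x\in\pi^{-1}(V_{i_k})$. Then $\pi^{-1}(V_{i_k})=U_{i_k1}\cup\dots\cup U_{i_kt_{i_k}}$, so we may choose $j_k\in\{1,\dots,t_{i_k}\}$ with $T^{(k-1)N}x\in U_{i_kj_k}$.

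The pair of sequences $(i_1,i_2,\dots;j_1,j_2,\dots)$ satisfies $i_k\in\{1,\dots,a\}$ and $j_k\in\{1,\dots,t_{i_k}\}$. It is therefore an infinite path $\gamma$ starting at the root, passing through the vertices $v_q=(i_1,\dots,i_q;j_1,\dots,j_{\lceil wq\rceil})$. By construction,
\[
x\in\bigcap_{k=1}^{q}T^{-(k-1)N}\pi^{-1}(V_{i_k})\cap\bigcap_{k=1}^{\lceil wq\rceil}T^{-(k-1)N}U_{i_kj_k}=E_{v_q}
\]
for every $q$. This uses only the first $\lceil wq\rceil\leq q$ of the $j$'s, all of which are defined.

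By the definition of $\mathcal{C}$, the vertex $v_{q(\gamma)}$ belongs to $\mathcal{C}$, since $\mathcal{C}$ is defined exactly as the set of such stopping vertices over all infinite paths. Hence $x\in E_{v_{q(\gamma)}}\subset\bigcup_{v\in\mathcal{C}}E_v$. The reverse inclusion is trivial.

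There is no real obstacle here. The only points needing care are that $x$ is assigned to a genuine infinite path, which is where surjectivity and $t_i\geq1$ enter, and that membership in $E_v$ requires only the initial segments of the two sequences.
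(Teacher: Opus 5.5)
Your proof is correct and follows the paper's argument: you choose $i_k$ and $j_k$ along the points $T^{(k-1)N}x$ to get an infinite path $\gamma$, then observe that $x\in E_{v_{q(\gamma)}}$ with $v_{q(\gamma)}\in\mathcal{C}$. One small remark: surjectivity of $\pi$ is not actually needed at this step, since $T^{(k-1)N}x\in\pi^{-1}(V_{i_k})$ already forces $t_{i_k}\geq 1$.
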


\begin{proof}
Let $x\in X$ be an arbitrary point.
For each $k\geq 1$, take $i_k$ and $j_k$ so that 
$S^{(k-1)N}\pi(x) \in V_{i_k}$ and $T^{(k-1)N} x\in U_{i_k j_k}$.
These choices determine an infinite path
$\gamma = (i_1, i_2, \dots; j_1, j_2, \dots)$.
Let $q = q(\gamma)$. We have $v_q\in \mathcal{C}$ and hence $x\in E_{v_q}$.
\end{proof}

For each $v\in \mathcal{C}$ with $E_v\neq \emptyset$, we choose a point $x_v\in E_v$.
Let $q_v\geq 1$ be the level of $v\in \mathcal{C}$. 
It follows from Claims \ref{claim: diameter of E_v} and \ref{claim: covering by a cut-set} that
\[ X = \bigcup_{v\in \mathcal{C}: E_v\neq \emptyset} B^w_{q_v N}(x_v, \varepsilon), \qquad 
   q_v N \geq N \geq N_0. \]
For each $v\in \mathcal{C}$, by \eqref{eq: incoming flow formula} and the definition of $\mathcal{C}$
\[ \log f_{\mathrm{in}}(v) = -q_v \log Z + g(v) > -q_v \log Z - \delta q_v. \]
Equivalently, $\exp\left(-q_v(\log Z + \delta)\right) < f_{\mathrm{in}}(v)$. Hence 
\[ \exp\left(-q_v N\left(\frac{\log Z}{N} + \delta \right)\right) \leq \exp\left(-q_v(\log Z + \delta)\right) < f_{\mathrm{in}}(v). \]
By Claim \ref{claim: minimal cut-set}
\[ \sum_{v\in \mathcal{C}} 
   \exp\left(-q_v N\left(\frac{\log Z}{N} + \delta\right)\right) 
   < \sum_{v\in \mathcal{C}} f_{\mathrm{in}}(v) = 1. \]
Since $Z < e^{sN}$, 
\[ \sum_{v\in \mathcal{C}} e^{-q_v N (s+\delta)} < 1. \]
Therefore 
\[ \Lambda^{w, s + \delta}_{N_0, \varepsilon}(X) \leq 1. \]
This bound holds for every $N_0$, with $s$, $\delta$, and
$\varepsilon$ fixed. Letting $N_0\to\infty$ gives
\[
\Lambda^{w,s+\delta}_{\varepsilon}(X)\leq1.
\]
Thus $\FHh^w(X, T, \varepsilon) \leq s + \delta$.
Letting $\delta\to 0$ and $\varepsilon \to 0$, we obtain $\FHh^w(X, T) \leq s$.
Since $s$ is an arbitrary positive number with $s>\h^w(X, T)$, this proves 
\[ \FHh^w(X, T) \leq \h^w(X, T). \]

We conclude this section with a brief overview of the proof.
The main task is to turn a two-step cover of $X$ 
into an efficient
cover by weighted Bowen balls.
For this purpose, we introduce an auxiliary rooted tree and
a unit flow on it.
A suitably chosen finite cut-set yields the desired cover:
the cut-set property ensures 
that the associated balls cover $X$,
while the flow estimates control their total covering cost.

\section{Proof of the general case of Theorem \ref{theorem: main theorem}}
\label{section: proof of the general case of the main theorem}

In this section we prove Theorem \ref{theorem: main theorem} 
in the general case of an arbitrary subset $\Omega\subset X$.
First we explain what makes the general case more difficult 
than the special case $\Omega=X$.

The argument of the preceding section starts from a fixed reference
time $N$ and proceeds through the time scales $N,2N,3N,\dots$,
applying the same two-step cover at successive points $T^{kN}x$
along each orbit.
For an arbitrary subset $\Omega\subset X$, however,
$x\in\Omega$ does not imply $T^{kN}x\in\Omega$.
Thus, a cover chosen to estimate $\#^w(\Omega,N,\varepsilon)$
cannot in general be used repeatedly in this way.

The key idea of the present section is to reverse the direction
in which the time scale changes.
Assume $0<w<1$. (The endpoint cases $w=0, 1$ are simpler.)
Starting from a reference time $N$, we pass successively to
shorter time scales,
\[
N,\ \lceil wN\rceil,\
\left\lceil w\lceil wN\rceil\right\rceil,\ \dots,
\]
rather than extending the time horizon beyond $N$.
This avoids the need to apply the same cover at later points
of the orbit, which may lie outside $\Omega$.

The scale-descent inequality in
\S\ref{subsection: scale-descent inequality}
provides an abstract formulation of this idea.
We apply it in Step~4 of
\S\ref{subsection: proof of main theorem},
where we select a suitable time scale 
from the descending sequence.

This section is organized as follows.
In \S\ref{subsection: scale-descent inequality}, we establish
a scale-descent inequality that converts an estimate involving
two time scales into a weighted estimate at a single scale.
In \S\ref{subsection: fractional covering problem and its dual},
we recall the duality between fractional covering and fractional
packing problems.
We use this duality in
\S\ref{subsection: finitary dynamical Frostman lemma}
to prove a finitary dynamical Frostman lemma, which provides
finitely supported probability measures satisfying Frostman-type
bounds over finite time ranges.
In \S\ref{subsection: partition construction and entropy estimates},
we construct finite families of partitions with controlled
exceptional sets and derive entropy estimates to control
boundary errors.
Finally, in \S\ref{subsection: proof of main theorem},
we combine these ingredients to prove
Theorem \ref{theorem: main theorem}.

\subsection{Scale-descent inequality}
\label{subsection: scale-descent inequality}

In the proof of Theorem \ref{theorem: main theorem}, 
we will encounter the following situation.
Suppose that two nonnegative sequences $(a_n)$ and $(b_n)$ satisfy
\[
a_n+b_{\lceil wn\rceil}\geq sn
\]
for some $s\geq0$.
Assuming also that $b_n$ grows at most linearly, 
we would like to find an index $n'$ for which
\[
\frac{a_{n'}+wb_{n'}}{n'}
\geq s-(\text{small error term}).
\]
The following lemma gives a quantitative estimate of this form.

\begin{lemma}[Scale-descent inequality]
\label{lemma: scale-descent inequality}
Let $0<w\leq1$, $s\geq0$, and $C\geq0$.
Let $K$ and $L$ be positive integers.
Suppose that two nonnegative finite sequences
$(a_n)_{1 \leq n\leq L}$ and $(b_n)_{1\leq n\leq L}$
satisfy
\[
b_n\leq Cn,  \qquad
a_n+b_{\lceil wn\rceil}\geq sn
\qquad
(1  \leq n\leq L).
\]
Define integers $n_0,n_1,\dots,n_K$ recursively by
\[
n_0=L,
\qquad
n_{k+1}=\lceil wn_{k}\rceil
\quad (0\leq k <K).
\]
Then there exists an integer $k$ with $0\leq k<K$ such that
\[
\frac{a_{n_k}+wb_{n_k}}{n_k}
\geq
s-\frac{wC}{K}-\frac{C}{w^K L}.
\]
\end{lemma}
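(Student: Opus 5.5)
The plan is a telescoping argument along the descending chain $n_0=L>n_1>\dots$ (or at least $n_{k+1}\leq n_k$), normalized by the current scale. I then average over $k$ and pick a $k$ at least as good as the average.

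First, the bookkeeping on indices. Since $0<w\leq1$ and $n_0=L\geq1$, induction gives $1\leq n_{k+1}=\lceil wn_k\rceil\leq n_k\leq L$. So all the terms $a_{n_k},b_{n_k}$ with $0\leq k\leq K$ are defined. Moreover $wn_k\leq n_{k+1}<wn_k+1$, and iterating the lower bound gives $n_k\geq w^kL\geq w^KL$ for $0\leq k\leq K$.

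Next, put $c_k=b_{n_k}/n_k$, so that $0\leq c_k\leq C$. The hypothesis at $n=n_k$ reads $a_{n_k}\geq sn_k-b_{n_{k+1}}$. Using $b_{n_{k+1}}=c_{k+1}n_{k+1}$ and $n_{k+1}<wn_k+1$, I get
\[
\frac{a_{n_k}+wb_{n_k}}{n_k}\geq s+wc_k-c_{k+1}\frac{n_{k+1}}{n_k}
\geq s+w(c_k-c_{k+1})-\frac{c_{k+1}}{n_k}
\geq s+w(c_k-c_{k+1})-\frac{C}{w^KL}.
\]

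Finally, I average this over $0\leq k<K$. The middle terms telescope to $\frac{w}{K}(c_0-c_K)$, which is at least $-\frac{wC}{K}$ because $c_0\geq0$ and $c_K\leq C$. Hence the average of $(a_{n_k}+wb_{n_k})/n_k$ over $0\leq k<K$ is at least $s-\frac{wC}{K}-\frac{C}{w^KL}$. Some $k$ in this range must attain at least the average, which is the claim.

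I expect no serious obstacle. The only delicate point is the rounding in $n_{k+1}=\lceil wn_k\rceil$. It is absorbed by the bound $n_{k+1}/n_k<w+1/n_k$ together with the crude lower bound $n_k\geq w^KL$, and this is exactly what produces the error term $C/(w^KL)$.
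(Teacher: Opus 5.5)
Your proof is correct and follows essentially the same route as the paper: normalize $c_k=b_{n_k}/n_k$, apply the hypothesis at $n=n_k$, absorb the ceiling rounding into a term of size at most $C/n_k\le C/(w^KL)$, telescope the $w(c_k-c_{k+1})$ terms, and take a $k$ that is at least as good as the average. The differences are cosmetic. The paper writes the rounding as $\theta_k=n_{k+1}-wn_k\in[0,1)$ and averages $\sum_k C/n_k$ before bounding it, whereas you bound each rounding term directly. You also check explicitly that $1\le n_k\le L$, which the paper leaves implicit.
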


\begin{proof}
Set $r_k = b_{n_k}/n_k$ for $0\leq k\leq K$.
Then $0\leq r_k\leq C$ for all $k$.
We also set $\theta_k = n_{k+1}-wn_k$ for $0\leq k<K$.
We have $0\leq \theta_k<1$ and $n_{k+1} = wn_k + \theta_k$. 
From the assumption $a_{n_k}+b_{n_{k+1}}\geq sn_k$
$(0\leq k <K)$, we obtain
\[ s - \frac{a_{n_k}+wb_{n_k}}{n_k} 
   \leq \frac{a_{n_k}+b_{n_{k+1}}}{n_k} - \frac{a_{n_k}+wb_{n_k}}{n_k}
   = \frac{b_{n_{k+1}}-wb_{n_k}}{n_k}. \]
We have 
\begin{align*}
   b_{n_{k+1}}-wb_{n_k} 
&= n_{k+1}r_{k+1}-wn_kr_k  \\
&= (wn_k+\theta_k)r_{k+1}-wn_kr_k \\
&= wn_k(r_{k+1}-r_k)+\theta_kr_{k+1}. 
\end{align*}
Hence 
\[ s - \frac{a_{n_k}+wb_{n_k}}{n_k} \leq
  w(r_{k+1}-r_k)+\frac{\theta_k}{n_k}r_{k+1} 
  \leq w(r_{k+1}-r_k)+\frac{C}{n_k}. \]
In the second inequality, 
we used the bounds $0\leq \theta_k<1$ and $0\leq r_{k+1}\leq C$.

Summing over $k=0,\dots,K-1$ and using telescoping cancellation,
we obtain
\[
\sum_{k=0}^{K-1} \left(s - \frac{a_{n_k}+wb_{n_k}}{n_k}\right)
\leq w(r_K-r_0)+\sum_{k=0}^{K-1}\frac{C}{n_k}
\leq wC+\sum_{k=0}^{K-1}\frac{C}{n_k}.
\]
Therefore, there exists an integer $k$ with $0\leq k<K$ such that
\[ s - \frac{a_{n_k}+wb_{n_k}}{n_k}
\leq \frac{1}{K}\left(wC+\sum_{k=0}^{K-1}\frac{C}{n_k}\right)
= \frac{wC}{K}+\frac{C}{K}\sum_{k=0}^{K-1}\frac{1}{n_k}. \]
Since $n_k \geq w^k L \geq w^K L$, we conclude that
\[ s - \frac{a_{n_k}+wb_{n_k}}{n_k}
\leq \frac{wC}{K}+\frac{C}{w^K L}. \]
\end{proof}

For fixed $w$ and $C$, 
the error terms in the lemma can be made arbitrarily
small by first choosing $K$ sufficiently large and then choosing $L$
sufficiently large.
Moreover, since $n_k\geq w^K L$, 
the selected time scale $n_k$ can also be kept
arbitrarily large.

We also note that the above proof does not require the sequences $(a_n)$
and $(b_n)$ to be defined for all $1\leq n\leq L$.
In fact, it only uses the values
\[
a_{n_0},\dots,a_{n_{K-1}}
\qquad\text{and}\qquad
b_{n_0},\dots,b_{n_K}.
\]
Since $n_k\geq\lceil w^kL\rceil$, it suffices for the nonnegative
sequences $(a_n)$ and $(b_n)$ to be defined on the ranges
\[
\lceil w^{K-1}L\rceil\leq n\leq L
\qquad\text{and}\qquad
\lceil w^KL\rceil\leq n\leq L,
\]
respectively.
The bound $b_n\leq Cn$ need only hold on the latter range,
and the inequality $a_n+b_{\lceil wn\rceil}\geq sn$
need only hold on the former.
The conclusion of the lemma remains unchanged.

This observation will be useful in the proof of
Theorem \ref{theorem: main theorem}; see Step~4 of 
\S \ref{subsection: proof of main theorem}.
For simplicity, however, we have stated the lemma with both
sequences defined on the full interval $1\leq n\leq L$.

\subsection{Fractional covering problem and its dual}
\label{subsection: fractional covering problem and its dual}

In Section \ref{section: proof of the main theorem for Omega = X},
the key combinatorial ingredient was a flow argument on a rooted tree.
That argument does not extend directly to arbitrary subsets
$\Omega\subset X$.
Instead, we use a different combinatorial tool, 
namely the duality between
fractional covering and fractional packing problems.
This duality is well-known 
(see e.g. \cite[Chapter 1]{Williamson--Shmoys}, \cite[\S 10.1]{Matousek}) and
is a standard application of linear programming.
For readers less familiar with this combinatorial formulation, we review
it here.

We begin by recalling the duality theorem for linear programming.
For vectors $x=(x_1,\dots,x_m)^T$ and $y=(y_1,\dots,y_m)^T$
in $\mathbb{R}^m$, we write $x\geq y$ if $x_i\geq y_i$ for every
$1\leq i\leq m$.
All vector inequalities in this subsection 
are interpreted in this coordinatewise sense.
Let $r$ and $m$ be positive integers, let $A$ be a real $r\times m$
matrix, and let $b\in\mathbb{R}^r$ and $c\in\mathbb{R}^m$.
Define the feasible sets
\[
P:=\{x\in\mathbb{R}^m\mid x\geq0,\ Ax\geq b\},
\qquad
D:=\{y\in\mathbb{R}^r\mid y\geq0,\ A^Ty\leq c\}.
\]

The following is a fundamental result in linear programming
(see e.g. \cite[Proposition 10.1.2]{Matousek}).
\begin{theorem}[Linear programming duality] 
  \label{theorem: linear programming duality}
Suppose that 
$P\neq \emptyset$ and $\inf\{c^T \cdot x\mid x\in P\} >- \infty$.
Then we have $D\neq \emptyset$, and 
\[ \min_{x\in P} \left(c^T\cdot x\right) 
   = \max_{y\in D} \left(b^T\cdot y\right). \]
Here the minimum and maximum exist.
\end{theorem}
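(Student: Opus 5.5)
The statement above is the standard strong duality theorem. I would prove it in the usual way, from a separation argument for a finitely generated cone.

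\emph{Weak duality.} For $x\in P$ and $y\in D$, using $x\geq0$, $A^Ty\leq c$, $y\geq0$ and $Ax\geq b$, we get
\[
c^T x\;\geq\;(A^Ty)^Tx\;=\;y^T(Ax)\;\geq\;y^Tb .
\]
So $\sup_{y\in D}b^Ty\leq\inf_{x\in P}c^Tx=:v$, where $v$ is finite by hypothesis.

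\emph{Closedness of the cone.} Consider the linear map $\Phi\colon\mathbb{R}^m\times\mathbb{R}^r\times\mathbb{R}\to\mathbb{R}^r\times\mathbb{R}$ given by
\[
\Phi(x,z,t)=(Ax-z,\;c^Tx+t),
\]
and set $\mathcal K=\Phi(\text{nonnegative orthant})$. Then $\mathcal K$ is the convex cone generated by the finitely many vectors $(A_j,c_j)$, $(-e_i,0)$ and $(0,1)$, where $A_j$ denotes the $j$-th column of $A$. The technical point I expect to be the main obstacle is that $\mathcal K$ is closed. I would prove this via Carath\'eodory's theorem for cones: every element of $\mathcal K$ lies in the cone spanned by some linearly independent subfamily of the generators. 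Each such cone is the image of an orthant under an injective linear map, hence closed. Since there are finitely many subfamilies, $\mathcal K$ is a finite union of closed sets and therefore closed.

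\emph{The minimum is attained.} For $x\in P$, put $z=Ax-b\geq0$ and $t=0$; this shows $(b,c^Tx)\in\mathcal K$. Taking $x\in P$ with $c^Tx\downarrow v$ gives $(b,v)\in\overline{\mathcal K}=\mathcal K$. So there are $x\geq0$ and $z,t\geq0$ with $Ax-z=b$ and $c^Tx+t=v$. Then $x\in P$ and $c^Tx\leq v$, so this $x$ attains the minimum.

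\emph{Dual feasibility and equality.} Fix $\varepsilon>0$. Then $(b,v-\varepsilon)\notin\mathcal K$, since otherwise some $x\in P$ would satisfy $c^Tx\leq v-\varepsilon$. By the separating hyperplane theorem for a point and a closed convex cone, there is $(u,\lambda)\in\mathbb{R}^r\times\mathbb{R}$ with
\[
u^T\xi+\lambda\sigma\geq0 \ \text{ on } \mathcal K,
\qquad
u^Tb+\lambda(v-\varepsilon)<0 .
\]
Testing against the generators gives $u\leq0$, $\lambda\geq0$ and $A^Tu+\lambda c\geq0$.

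If $\lambda=0$, pick any $x\in P$. Since $u\leq0$ and $Ax\geq b$,
\[
u^Tb\;\geq\;u^TAx\;=\;(A^Tu)^Tx\;\geq\;0,
\]
which contradicts $u^Tb<0$. Hence $\lambda>0$, and we may normalize to $\lambda=1$. Then $y:=-u$ satisfies $y\geq0$, $A^Ty\leq c$ and $b^Ty>v-\varepsilon$. In particular $D\neq\emptyset$ and $\sup_D b^Ty\geq v-\varepsilon$. Letting $\varepsilon\to0$ and using weak duality gives $\sup_D b^Ty=v$.

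\emph{The maximum is attained.} I would run the same closedness argument on the dual cone generated by $(A^T)_i$-type vectors: the cone generated by the vectors $(\text{$i$-th row of }A,\ b_i)$ and $(e_j,0)$ in $\mathbb{R}^m\times\mathbb{R}$. Alternatively, rewrite the dual as the primal-form problem $\min(-b)^Ty$ subject to $y\geq0$ and $-A^Ty\geq -c$. This is bounded below by $-v$, so the attainment step above gives a maximizer.
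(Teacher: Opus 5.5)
Your proof is correct. Each step checks out:
\begin{itemize}
\item weak duality;
\item closedness of the finitely generated cone $\mathcal K$ via the conic Carath\'eodory theorem;
\item attainment of the primal minimum, obtained by placing the limit point $(b,v)$ in $\mathcal K$;
\item the separation step, where you rule out $\lambda=0$ using a feasible $x\in P$;
\item attainment of the dual maximum, by rewriting the dual as a primal-form problem with data $(-A^T,-c,-b)$, which is feasible and bounded below by $-v$.
\end{itemize}

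The paper does not prove this theorem. It states it as a standard fact, cites Matou\v{s}ek's book for it, and uses it only as a black box to obtain the covering/packing duality of Proposition \ref{proposition: duality between tau and nu}, which is then applied in the finitary Frostman lemma. So the difference is one of self-containedness, not of method. Your route amounts to a proof of Farkas' lemma from finite-dimensional separation of a point from a closed convex cone. It makes the paper's measure-free argument fully elementary, which fits the paper's stated aim of replacing Hahn--Banach and Riesz representation by finite-dimensional linear programming. The citation, in contrast, keeps the exposition short.

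One cosmetic remark on your first suggestion for dual attainment. The natural dual cone $\{(A^Ty+z,\ b^Ty-t): y,z,t\geq 0\}$ also has the generator $(0,-1)$, although it is not actually needed there. Your second suggestion, reducing to the primal attainment step already proved, is the cleaner choice.
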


We now apply this theorem to the fractional covering problem.
Let $V = \{v_1, \dots, v_r\}$ be a finite set.
Let $\mathcal{E} = \{E_1, \dots, E_m\}$ be a family of 
subsets of $V$ that satisfies 
\[ V = \bigcup_{j=1}^m E_j. \]
Suppose we are given a positive number $a_j$ 
for each $j$ with $1\leq j\leq m$, which is interpreted as 
a \lq\lq{}cost of $E_j$\rq\rq{}. 

A collection of nonnegative numbers $\lambda_1, \dots, \lambda_m$ 
is called a \textbf{fractional covering of $V$}
with respect to $\mathcal{E}$ if for each $v_\alpha \in V$ we have 
\[ \sum_{j:\, v_\alpha \in E_j} \lambda_j \geq 1, \]
where the sum is taken over all indices 
$j$ with $v_\alpha \in E_j$.\footnote{If $\lambda_j\in\{0,1\}$ 
for every $j$, then the sets $E_j$ with $\lambda_j=1$ 
form an ordinary covering of $V$.
Thus, fractional covering is a relaxation of ordinary covering.}
The \textbf{cost} of a fractional covering is
$\sum_{j=1}^m a_j\lambda_j$.
Since $\mathcal{E}$ covers $V$, the choice
$\lambda_1=\cdots=\lambda_m=1$ is a fractional covering.
Let $\tau$ denote the minimum cost of a fractional covering of $V$.

Next, we consider the dual problem, in which weights are assigned to the
points of $V$ rather than to the sets in $\mathcal{E}$.
A collection of nonnegative numbers $q_1,\dots,q_r$ is called a
\textbf{fractional packing of $V$} with respect to $\mathcal{E}$ and
$(a_1, \dots, a_m)$ if
\[
\sum_{\alpha:\,v_\alpha\in E_j}q_\alpha\leq a_j
\qquad (1\leq j\leq m),
\]
where the sum is taken over all $\alpha$ with $v_\alpha \in E_j$.
Thus, the total weight assigned to the points of each $E_j$ is at most
its cost $a_j$.
Let $\nu$ denote the maximum of
$\sum_{\alpha=1}^r q_\alpha$ over all fractional packings of $V$.

\begin{proposition} \label{proposition: duality between tau and nu}
We have $\tau = \nu$.
\end{proposition}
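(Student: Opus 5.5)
This is a direct application of Theorem \ref{theorem: linear programming duality}, and the plan is to cast the fractional covering problem as the primal LP. Let $A$ be the $r\times m$ incidence matrix with entries $A_{\alpha j}=1$ if $v_\alpha\in E_j$ and $A_{\alpha j}=0$ otherwise. Take $b=(1,\dots,1)^T\in\mathbb{R}^r$ and $c=(a_1,\dots,a_m)^T\in\mathbb{R}^m$.

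With these choices, the primal feasible set $P=\{\lambda\in\mathbb{R}^m\mid \lambda\geq0,\ A\lambda\geq b\}$ is exactly the set of fractional coverings of $V$ with respect to $\mathcal{E}$. Indeed, the $\alpha$-th row of $A\lambda\geq b$ reads $\sum_{j:\,v_\alpha\in E_j}\lambda_j\geq1$. The objective $c^T\lambda=\sum_j a_j\lambda_j$ is the cost of the covering. Similarly, the dual feasible set $D=\{q\in\mathbb{R}^r\mid q\geq0,\ A^Tq\leq c\}$ is exactly the set of fractional packings. Here the $j$-th row of $A^Tq\leq c$ reads $\sum_{\alpha:\,v_\alpha\in E_j}q_\alpha\leq a_j$, and the dual objective $b^Tq=\sum_\alpha q_\alpha$ is the total packing weight.

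Next I check the hypotheses of the duality theorem. The set $P$ is nonempty because $\lambda_1=\cdots=\lambda_m=1$ is feasible; this uses $V=\bigcup_j E_j$, which gives every row of $A$ at least one entry equal to $1$. The infimum of $c^T\lambda$ over $P$ is bounded below by $0$, since $a_j>0$ and $\lambda\geq0$. Theorem \ref{theorem: linear programming duality} then shows that both optima are attained and that
\[
\min_{\lambda\in P}c^T\lambda=\max_{q\in D}b^Tq.
\]
The left side is $\tau$ and the right side is $\nu$. In particular, the minimum defining $\tau$ and the maximum defining $\nu$ exist, which justifies the phrasing of their definitions.

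There is no real obstacle here. The only care needed is in the bookkeeping: setting up the index conventions for $A$ versus $A^T$ correctly, and verifying the two hypotheses of the duality theorem (nonemptiness of $P$ and boundedness from below).
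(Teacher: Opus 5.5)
Your proposal is correct and follows the paper's proof essentially verbatim. It uses the same incidence matrix $A$ with $b=(1,\dots,1)^T$ and $c=a$, the same two checks (feasibility of $\lambda=(1,\dots,1)^T$ and the lower bound $0$ on the objective), and then applies Theorem~\ref{theorem: linear programming duality}.
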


\begin{proof}
Let $A=(A_{\alpha j})_{1\leq\alpha\leq r,\,1\leq j\leq m}$ be the
incidence matrix of $V$ and $\mathcal{E}$, defined by
\[
A_{\alpha j}
=
\begin{cases}
1 & \text{if }v_\alpha\in E_j,\\
0 & \text{otherwise}.
\end{cases}
\]
Write
\[
\lambda=(\lambda_1,\dots,\lambda_m)^T,
\qquad
q=(q_1,\dots,q_r)^T,
\qquad
a=(a_1,\dots,a_m)^T,
\]
and let $\mathbf{1}=(1,\dots,1)^T\in\mathbb{R}^r$.
Then the fractional covering problem is
\[
\tau
=
\min\{a^T\lambda\mid\lambda\geq0,\ A\lambda\geq\mathbf{1}\},
\]
while the fractional packing problem is
\[
\nu
=
\max\{\mathbf{1}^Tq\mid q\geq0,\ A^Tq\leq a\}.
\]
These are a primal--dual pair of linear programs.
The covering problem is feasible because
$\lambda=(1,\dots,1)^T$ is admissible, and its objective function is
bounded below by $0$.
Theorem \ref{theorem: linear programming duality}, applied with
$b=\mathbf{1}$ and $c=a$, therefore shows that $\tau=\nu$.
\end{proof}

\subsection{Finitary dynamical Frostman\rq{}s lemma}
\label{subsection: finitary dynamical Frostman lemma}

In this subsection, we prove a finitary version of the dynamical
Frostman lemma \cite[Lemma 3.3]{Feng--Huang} for arbitrary subsets
$\Omega\subset X$.
We first recall the fractional analogue of
$\Lambda^{w,s}_{N,\varepsilon}(\Omega)$, introduced by Feng and Huang
\cite[\S 3.2]{Feng--Huang}.

Let $0\leq w \leq 1$.
Let $\pi\colon (X, T)\to (Y, S)$ be 
an equivariant continuous map between dynamical systems.
We take metrics $\mathbf{d}$ and $\mathbf{d}^\prime$ on $X$ and $Y$ respectively.
Let $\Omega \subset X$ be an arbitrary subset.

For $s\geq0$, $\varepsilon>0$, and a positive integer $N$, define
$\mathcal W^{w,s}_{N,\varepsilon}(\Omega)$ to be the infimum of
\[
\sum_j c_j e^{-sn_j}
\]
over all at most countable families $(x_j,n_j,c_j)$, where $x_j\in X$,
$n_j\geq N$ is an integer, and $c_j\geq0$, satisfying
\[
\mathbf 1_{\Omega}
\leq
\sum_j c_j\mathbf 1_{B^w_{n_j}(x_j,\varepsilon)}
\qquad\text{pointwise on }X.
\]
Here $\mathbf{1}_{\Omega}$ and 
$\mathbf{1}_{B^w_{n_j}(x_j, \varepsilon)}$ denote the 
characteristic functions of $\Omega$ and 
$B^w_{n_j}(x_j, \varepsilon)$ respectively.

The next proposition is a key result.
This was given in \cite[Proposition 3.5]{Feng--Huang}.
We notice that the proof of \cite[Proposition 3.5]{Feng--Huang} does not utilize 
measure theory although it is strongly motivated by the ideas of 
geometric measure theory.

\begin{proposition} 
  \label{proposition: fractional version of weighted entropy definition}
For any $s\geq 0$, $\varepsilon>0$ and $\delta>0$, if we choose 
$N$ sufficiently large then 
\[ \Lambda^{w, s+\delta}_{N, 6\varepsilon}(\Omega) \leq
    \mathcal{W}^{w,s}_{N,\varepsilon}(\Omega)
    \leq \Lambda^{w, s}_{N,\varepsilon}(\Omega). \]
\end{proposition}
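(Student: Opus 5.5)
The right-hand inequality is immediate, and the left-hand one is a Vitali-type covering argument in the weighted Bowen metric; this follows Feng--Huang's proof of \cite[Proposition 3.5]{Feng--Huang}. For $\mathcal{W}^{w,s}_{N,\varepsilon}(\Omega)\leq\Lambda^{w,s}_{N,\varepsilon}(\Omega)$, note that every countable cover $\Omega\subset\bigcup_i B^w_{n_i}(x_i,\varepsilon)$ with $n_i\geq N$ is an admissible family with all $c_i=1$. Indeed, $\mathbf 1_\Omega\leq\sum_i\mathbf 1_{B^w_{n_i}(x_i,\varepsilon)}$, and the cost is the same. Taking infima gives the inequality.

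For the left inequality, take an admissible family $(x_j,n_j,c_j)$ with $\sum_j c_je^{-sn_j}<\infty$. We may assume each $c_j$ is a positive rational and that the family is finite on each level $n$, after small perturbation and truncation of the tail. Group the balls by length: for each $n\geq N$ let $\mathcal{I}_n=\{j: n_j=n\}$. For $t\in(0,1)$ consider $\Omega_{n,t}=\{x\in\Omega:\sum_{j\in\mathcal I_n}c_j\mathbf 1_{B^w_n(x_j,\varepsilon)}(x)>t\}$. Because $\sum_n\sum_{j\in\mathcal I_n}c_j\mathbf 1\geq1$ on $\Omega$, with the choice $t=t_n=\frac{\delta'}{n^2}$ suitably normalized we get $\Omega\subset\bigcup_{n\geq N}\Omega_{n,t_n}$, for some fixed $\delta'$ with $\sum_n t_n\leq1$.

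The key step is to cover each $\Omega_{n,t}$ by a family of enlarged balls $B^w_n(x_j,6\varepsilon)$ with $j\in\mathcal I_n$ of total number at most $\frac{C}{t}\sum_{j\in\mathcal I_n}c_j$. I would do this by a Vitali/greedy selection. Since $\mathbf d^w_n$ is a genuine metric (a max of two pseudometrics), the balls $B^w_n(x_j,\varepsilon)$ for $j\in\mathcal I_n$ all have the same radius. The standard $5r$-type covering lemma (hence the factor $6$, allowing some slack) then gives a subfamily whose enlargements cover. To get the counting bound I would use the multiplicity argument of Federer's $2.10.24$/Feng--Huang Lemma. Write the rational weights $c_j=m_j/M$ and replicate balls. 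Then repeatedly extract disjoint subfamilies: each point of $\Omega_{n,t}$ is covered by more than $tM$ replicated balls, so about $tM$ rounds of disjoint-selection-with-enlargement each cover $\Omega_{n,t}$. Averaging over rounds, one round uses at most $\frac{1}{tM}\sum_j m_j=\frac1t\sum_jc_j$ balls.

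Summing over $n$, this yields a cover of $\Omega$ by balls of radius $6\varepsilon$ with cost
\[\sum_{n\geq N}\frac{n^2}{\delta'}\sum_{j\in\mathcal I_n}c_je^{-(s+\delta)n}\leq\sum_j c_je^{-sn_j}\cdot\sup_{n\geq N}\frac{n^2e^{-\delta n}}{\delta'},\]
and the supremum is at most $1$ once $N$ is large, depending only on $\delta$. Taking the infimum over admissible families gives $\Lambda^{w,s+\delta}_{N,6\varepsilon}(\Omega)\leq\mathcal W^{w,s}_{N,\varepsilon}(\Omega)$.

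The main obstacle is the combinatorial covering-with-multiplicity step: turning fractional weight at least $t$ into an honest cover with cardinality $O(\sum c_j/t)$ while paying only a bounded radius enlargement. The reduction to rational weights and the countable case also needs care. I would handle these by first working with finitely many $j$ per level and a compactness/limiting argument on $\Omega$ only through the pointwise inequality, which avoids any measure theory.
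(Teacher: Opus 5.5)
Your proposal is correct and is essentially the proof of Feng--Huang's Proposition 3.5, which the paper cites rather than reproves. The shared ingredients are level-by-level thresholds $t_n\asymp n^{-2}$ (you need $\sum_{n\geq N}t_n<1$ strictly, not $\leq 1$), rational weights, and repeated maximal disjoint selections in which each round lowers the multiplicity at any point by at most one. The only divergence is the case of infinitely many balls on one level, where your truncation is legitimate but needs a one-line justification. Since $\sum_{j\in\mathcal I_n}c_j\leq e^{sn}\sum_j c_je^{-sn_j}<\infty$, you can discard tails of weight at most $\eta 2^{-n}$ on each level. The pointwise loss on $\Omega$ is then at most $\eta$, and rescaling by $(1-\eta)^{-1}$ restores admissibility. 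Feng--Huang instead handle this case by passing to a limit of the selected centers by compactness, which is where the slack from $5\varepsilon$ to $6\varepsilon$ is spent. Your route therefore even gives the bound with a smaller enlargement, and it makes the vague ``compactness/limiting argument'' in your last paragraph unnecessary.
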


The following lemma is the main result of this subsection.

\begin{lemma}[Finitary dynamical Frostman\rq{}s lemma] 
  \label{lemma: finitary dynamical Frostman lemma}
  Let $0<s < \FHh^w(\Omega, T)$.
  There exist $r>0$ and a positive integer $N_0$ such that, for every
  integer $L\geq N_0$, there are a finite subset $E_L\subset\Omega$
  and a function $p_L\colon E_L\to[0,1]$ satisfying the following
  conditions:
   \begin{itemize}
    \item $\sum_{x\in E_L} p_L(x) = 1$,
    \item for any integer $n$ with $N_0\leq n \leq L$ and a subset $A\subset X$, 
          if the diameter of 
          $A$ with respect to $\mathbf{d}^w_n$ is smaller than $r$ then we have
          \[ \sum_{x\in A\cap E_L} p_L(x)  \leq e^{-sn}. \]
   \end{itemize}
\end{lemma}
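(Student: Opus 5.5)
The plan is to combine Proposition \ref{proposition: fractional version of weighted entropy definition} with the covering/packing duality of Proposition \ref{proposition: duality between tau and nu}, after discretizing the problem to a finite set of points and a finite family of balls. First we fix the parameters. Choose $\delta>0$ with $s+\delta<\FHh^w(\Omega,T)$. Since $\FHh^w(\Omega,T,\varepsilon)$ increases to $\FHh^w(\Omega,T)$ as $\varepsilon\to0$, we can pick $\varepsilon>0$ with $\FHh^w(\Omega,T,6\varepsilon)>s+\delta$. Then $\Lambda^{w,s+\delta}_{6\varepsilon}(\Omega)=\infty$, and since $\Lambda^{w,s+\delta}_{N,6\varepsilon}(\Omega)$ is nondecreasing in $N$, it is $\geq1$ for all large $N$. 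Next, Proposition \ref{proposition: fractional version of weighted entropy definition}, applied with $s,\varepsilon,\delta$, lets us choose $N_0$ so large that
\[
\mathcal W^{w,s}_{N_0,\varepsilon}(\Omega)\geq\Lambda^{w,s+\delta}_{N_0,6\varepsilon}(\Omega)\geq1 .
\]
We set $r=\varepsilon/2$. Note that $\Omega\neq\emptyset$, since $\FHh^w(\Omega,T)>0$.

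Now fix $L\geq N_0$. For each $n$ with $N_0\leq n\leq L$, compactness of $X$ gives a finite set $F_n\subset X$ with $X=\bigcup_{x\in F_n}B^w_n(x,\varepsilon/2)$. Consider the finite family of balls
\[
\mathcal B=\{B^w_n(x,\varepsilon)\mid N_0\leq n\leq L,\ x\in F_n\}.
\]
These balls generate a finite partition of $X$ into atoms, where each atom is determined by which members of $\mathcal B$ contain the point. For every atom meeting $\Omega$ we pick one point of $\Omega$ in it, and we let $E_L\subset\Omega$ be the finite set of these points. We then apply Proposition \ref{proposition: duality between tau and nu} with $V=E_L$, with sets $E_L\cap B$ for $B\in\mathcal B$, and with the cost of $B^w_n(x,\varepsilon)$ equal to $e^{-sn}$. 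The family covers $E_L$ because the smaller balls already cover $X$.

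The key step is to show that the minimal fractional covering cost $\tau$ satisfies $\tau\geq1$. Take any fractional covering $(\lambda_B)_{B\in\mathcal B}$ of $E_L$. For $y\in\Omega$, let $x\in E_L$ be the chosen point in the atom of $y$. Then $y$ and $x$ lie in exactly the same members of $\mathcal B$, so
\[
\sum_{B\ni y}\lambda_B=\sum_{B\ni x}\lambda_B\geq1 .
\]
Hence $\mathbf 1_\Omega\leq\sum_B\lambda_B\mathbf 1_B$. This is an admissible family in the definition of $\mathcal W^{w,s}_{N_0,\varepsilon}(\Omega)$, because every $n\geq N_0$. Therefore the cost of the covering is at least $\mathcal W^{w,s}_{N_0,\varepsilon}(\Omega)\geq1$, which gives $\tau\geq1$.

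Duality then provides a fractional packing $q\colon E_L\to[0,\infty)$ with $\sum_x q(x)=\tau\geq1$ and $\sum_{x\in B\cap E_L}q(x)\leq e^{-sn}$ for every $B=B^w_n(\cdot,\varepsilon)\in\mathcal B$. We set $p_L=q/\tau$. This is a probability vector, so its values lie in $[0,1]$, and dividing by $\tau\geq1$ preserves the bounds $\sum_{x\in B\cap E_L}p_L(x)\leq e^{-sn}$.

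Finally, let $A\subset X$ have $\mathbf d^w_n$-diameter less than $r=\varepsilon/2$, where $N_0\leq n\leq L$. We may assume $A\neq\emptyset$. Pick $a\in A$ and $x\in F_n$ with $a\in B^w_n(x,\varepsilon/2)$. By the triangle inequality for $\mathbf d^w_n$ we get $A\subset B^w_n(x,\varepsilon)\in\mathcal B$, and hence $\sum_{x'\in A\cap E_L}p_L(x')\leq e^{-sn}$.

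I expect the main obstacle to be the discretization step: one must ensure that the finite LP is at least as costly as the infinite fractional problem $\mathcal W$. The atom construction is what makes this work, since it guarantees that any fractional cover of the finite set $E_L$ automatically covers all of $\Omega$. Some care is also needed in ordering the choices: $\varepsilon$ comes first, then $N_0$ from Proposition \ref{proposition: fractional version of weighted entropy definition}. This ordering makes $r$ and $N_0$ independent of $L$.
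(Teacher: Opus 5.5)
Your proposal is correct and follows essentially the same route as the paper. The paper also uses Proposition~\ref{proposition: fractional version of weighted entropy definition} to get $\mathcal{W}^{w,s}_{N_0,2r}(\Omega)>1$, takes $2r$-balls centered at $r$-spanning sets $F_n$, picks representatives of membership patterns in $\Omega$, applies covering--packing duality, and normalizes; your $\varepsilon$ is simply the paper's $2r$, and your bound $\tau\geq 1$ in place of $\tau>1$ is equally sufficient.
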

We write 
$\displaystyle p_L(A):=\sum_{x\in A\cap E_L}p_L(x)$
for any subset $A$ of $X$ below.
Here, \lq\lq{}finitary\rq\rq{} means that $p_L$ 
provides a probability measure supported on a finite subset of $X$
and that the Frostman-type
estimates are required only for the finite range of times
$N_0\leq n\leq L$.
In the original proof of \cite[Lemma 3.3]{Feng--Huang}, 
Feng and Huang used the Hahn--Banach theorem and 
the Riesz representation theorem.
In essence, our proof below replaces these 
infinite-dimensional functional-analytic tools 
with finite-dimensional linear programming.

\begin{proof}
Take $\delta>0$ with $s+\delta < \FHh^w(\Omega, T)$.
Choose $r>0$ so that $s+\delta < \FHh^w(\Omega, T, 12r)$.
Then we have $\Lambda^{w, s+\delta}_{N, 12r}(\Omega)>1$ 
for sufficiently large $N$.
By Proposition \ref{proposition: fractional version of weighted entropy definition},
we can take a positive integer $N_0$ that satisfies 
\[  \mathcal{W}^{w,s}_{N_0,2r}(\Omega) >1. \]

Fix $L\geq N_0$.
For any integer $n$ with $N_0\leq n \leq L$, we take a finite subset 
$F_n\subset X$ such that $F_n$ is an $r$-spanning subset of $X$ 
with respect to $\mathbf{d}^w_n$,
namely for any $x\in X$ there exists $z\in F_n$ with 
$\mathbf{d}^w_n(x,z)< r$.
Set 
\[ \mathcal{B}_L = \{B_n^w(z, 2r)\mid z\in F_n, \, N_0\leq n \leq L\}. \] 
For $B = B_n^w(z, 2r)$, we write 
$\mathrm{n}(B) = n$.\footnote{Strictly speaking, 
$n$ is not necessarily determined by
the set $B=B_n^w(z,2r)$ itself.
We therefore regard $\mathcal B_L$ as an indexed family of balls
labeled by the pairs $(n,z)$ with $N_0\leq n\leq L$ and $z\in F_n$.
Distinct labels are retained even when they define the same
subset of $X$.
For simplicity, we suppress the labels in the notation;
all sums over $\mathcal B_L$ are understood to run over these labels.}

We can take 
a finite subset\footnote{Since $\mathcal B_L$ is finite, 
only finitely many membership patterns
\[
\bigl(\mathbf 1_B(x)\bigr)_{B\in\mathcal B_L},
\qquad x\in\Omega,
\]
occur.
Choose one representative of each pattern occurring in $\Omega$,
and let $E_L\subset\Omega$ be the set of these representatives.} 
$E_L\subset \Omega$ such that 
for any $x\in \Omega$ there exists $y\in E_L$ for which we have 
$\mathbf{1}_B(x) = \mathbf{1}_B(y)$ for all $B\in \mathcal{B_L}$.

We define 
\[ \tau_L 
  = \min\left\{\sum_{B\in \mathcal{B}_L} c_B \cdot e^{-\mathrm{n}(B)s}\, 
    \middle|\, 
   c_B\geq 0, \, \sum_{B:\, x\in B} c_B \geq 1 
   \text{ for all $x\in \Omega$}\right\}.\]
It follows from the definition of 
$\mathcal{W}^{w,s}_{N_0,2r}(\Omega)$ that 
\[ \tau_L \geq \mathcal{W}^{w,s}_{N_0,2r}(\Omega) >1. \]
From the definition of $E_L$ we also have 
\[ \tau_L = 
   \min\left\{\sum_{B\in \mathcal{B}_L} c_B \cdot e^{-\mathrm{n}(B)s}\, 
    \middle|\, 
   c_B\geq 0, \, \sum_{B:\, x\in B} c_B \geq 1 
   \text{ for all $x\in E_L$}\right\}.\]
Thus, $\tau_L$ is the value of the finite fractional covering problem
(reviewed in 
\S \ref{subsection: fractional covering problem and its dual}) 
on $E_L$ with the indexed family $(E_L\cap B)_{B\in\mathcal B_L}$
and costs $e^{-s\mathrm n(B)}$.

By Proposition \ref{proposition: duality between tau and nu}, 
there exists a map $q_L\colon E_L\to [0, \infty)$ such that 
$\sum_{z\in E_L} q_L(z) = \tau_L$ and 
\[ \sum_{z \in E_L\cap B} q_L(z) \leq e^{-\mathrm{n}(B)s} \quad 
   \text{ for all $B\in \mathcal{B}_L$}. \]

Define $p_L\colon E_L\to [0,1]$ by $p_L(z) = q_L(z)/\tau_L$.
We have $p_L(\Omega) = 1$ and $p_L(B) \leq e^{-\mathrm{n}(B)s}$
for all $B\in \mathcal{B}_L$.

Suppose $A\subset X$ satisfies $\diam (A, \mathbf{d}^w_n) < r$
for some $n$ with $N_0\leq n \leq L$.
Since $F_n$ is $r$-spanning with respect to $\mathbf{d}^w_n$, 
there is $z\in F_n$ with $A\subset B^w_n(z, 2r)$.
Then 
\[ p_L(A) \leq p_L\left(B^w_n(z, 2r)\right) \leq e^{-sn}. \]
\end{proof}

\subsection{Partition construction and entropy estimates}
\label{subsection: partition construction and entropy estimates}

This subsection is rather technical, and the motivation for the
constructions below may not be immediately apparent.
We therefore begin by explaining the underlying ideas.

Let $(X,T)$ be a dynamical system, and recall the standard
variational principle
\[
\h(X,T)
=
\sup_{\mu\in\mathscr{M}^T(X)}h_\mu(T).
\]
Topological entropy is defined using finite open covers, whereas
measure-theoretic entropy is defined using finite measurable partitions.
If $X$ is zero-dimensional, one can choose finite clopen partitions
with arbitrarily small atom diameters.
Such partitions serve simultaneously as open covers and measurable
partitions.
For a general compact metric space, however, such partitions need
not exist.

This difference between open covers and measurable partitions
is a source of technical difficulty in the proof of the variational
principle.
Near the boundaries of partition atoms, arbitrarily close points
may belong to different atoms.
This can introduce error terms when comparing covering numbers
with partition entropies.

A similar difficulty arises in the proof of
Theorem \ref{theorem: main theorem}.\footnote{If both $X$ and $Y$
are zero-dimensional (in particular, if they are subshifts over
finite alphabets), this boundary difficulty can be avoided by
using clopen partitions, and the proof of
Theorem \ref{theorem: main theorem} becomes substantially simpler.}
The partitions constructed in Lemma 
\ref{lemma: Partition families with controlled exceptional sets} 
below are designed to control these
boundary effects.
The key idea is to use a finite family of partitions rather than
a single partition.
The lemma 
provides partitions with atoms of small diameter,
a closed exceptional set for each partition, and a common
radius $\delta>0$.
For each partition, every ball of radius $\delta$ centered outside
its exceptional set is contained in a single atom of that partition.

We also require each point to be exceptional for only a small
proportion of the partitions.
More precisely, for any prescribed $\eta>0$, the family and the
radius $\delta$ can be chosen so that every point belongs to at most
an $\eta$-fraction of the exceptional sets.
A finite averaging argument then shows that, for any finitely
supported probability measure, there is a partition whose
exceptional set has measure at most $\eta$.
The family and the radius $\delta$ are chosen independently of
the measure; only the selection of a partition from the family
depends on it.
Thus, the same family and radius can be used for all the finitely
supported probability measures arising in our proof of Theorem 
\ref{theorem: main theorem}.

Finally, once the metric space and the prescribed bound on atom
diameters are fixed, the number of atoms in each partition is
bounded independently of $\eta$.
This uniform bound will be important when controlling the error
terms in the entropy estimates below.

Although motivated by applications to dynamical systems, the following
result is formulated for an arbitrary compact metric space and
does not require any underlying dynamics.

\begin{lemma}[Partition families with controlled exceptional sets]
\label{lemma: Partition families with controlled exceptional sets}
Let $(Z,\mathbf{d})$ be a compact metric space. 
For every $a>0$, there exists
an integer $q\geq 1$ with the following property.

For every $\eta>0$, there exist an integer $M\geq 1$, a number
$\delta>0$, finite partitions
\[
    \mathcal P_1,\ldots,\mathcal P_M
\]
of $Z$, and closed sets
\[
    D_1,\ldots,D_M\subset Z
\]
satisfying the following conditions:
\begin{enumerate}
    \item[(1)]
    Each partition $\mathcal P_m$ has at most $q$ atoms,
    all of diameter less than $a$.

    \item[(2)]
    For every $1\leq m \leq M$ and every $z\in Z\setminus D_m$,
    the open ball $B(z,\delta)$ is contained in a single atom
    of $\mathcal P_m$.

    \item[(3)]
    Every point belongs to at most an $\eta$-fraction of the
    exceptional sets:
    \[
        \#\{m\in\{1,\ldots,M\}:z\in D_m\}
        \leq \eta M
        \qquad (z\in Z).
    \]
\end{enumerate}
In particular, the bound $q$ on the number of atoms is independent
of $\eta$.
\end{lemma}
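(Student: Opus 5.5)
The plan is to build the partitions from a single finite open cover and then randomize the ``boundaries'' through a finite family of shifted threshold radii. Fix $a>0$. By compactness, choose points $z_1,\dots,z_q\in Z$ with $Z=\bigcup_{i=1}^q B(z_i,a/4)$. This $q$ depends only on $a$, and every partition below will have at most $q$ atoms.

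For a parameter $\rho\in[a/4,a/3]$, define the Voronoi-type ordered partition
\[
P_i^{\rho}=\{z\in Z : \mathbf d(z,z_i)<\rho\}\setminus\bigcup_{l<i}\{z:\mathbf d(z,z_l)<\rho\}.
\]
The sets $P^\rho_1,\dots,P^\rho_q$ (empty ones discarded) partition $Z$, because the balls of radius $a/4\le\rho$ already cover $Z$. Each atom lies in a ball of radius $\rho\le a/3$, so its diameter is at most $2a/3<a$. This gives (1).

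The only way a small ball around $z$ can meet two atoms is that $z$ lies near one of the spheres $\{\mathbf d(\cdot,z_l)=\rho\}$. So for a width $\delta>0$ we set
\[
D^\rho=\bigcup_{l=1}^q\{z:\ \rho-\delta\le \mathbf d(z,z_l)\le \rho+\delta\},
\]
which is closed. Suppose $z\notin D^\rho$ and $\mathbf d(z',z)<\delta$. Then for each $l$, $\mathbf d(z',z_l)<\rho$ holds if and only if $\mathbf d(z,z_l)<\rho$, by the triangle inequality. Hence $z'$ and $z$ have the same membership pattern and lie in the same atom, which gives (2).

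For (3), given $\eta>0$, choose $M$ equally spaced radii $\rho_m=a/4+m(a/12)/M$ for $1\le m\le M$. Take $\delta=\eta a/(24 q M)$, or a comparable small constant, and set $\mathcal P_m=\mathcal P^{\rho_m}$ and $D_m=D^{\rho_m}$. Fix a point $z$ and an index $l$. The condition $z\in D_m$ because of $l$ means $|\mathbf d(z,z_l)-\rho_m|\le\delta$. Since consecutive radii are spaced $a/(12M)$ apart and $2\delta$ is smaller than that spacing, this happens for at most one index $m$. Summing over $l$ shows that $z$ belongs to at most $q$ of the sets $D_m$.

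We then need $q\le\eta M$, so we first choose $M\ge q/\eta$ and then define $\delta$ from $M$ as above. The ordering of choices is therefore: $a\to q$ (independent of $\eta$), then $\eta\to M$, then $\delta$. This ordering is the crux of the lemma and is the step to check most carefully.

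The remaining work is bookkeeping. I do not expect a genuine obstacle. The main subtlety is verifying the pattern argument in (2), which requires strict and nonstrict inequalities to be handled consistently: open balls in the partition, closed annuli in $D$. One should also confirm that discarding empty atoms does not affect any of (1)–(3).
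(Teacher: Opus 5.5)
Your proof is correct and is essentially the paper's argument: the same cover by balls $B(z_i,a/4)$, the same ordered partitions by balls of $M\geq q/\eta$ distinct radii with closed annuli of width $\delta$ as exceptional sets, and the same count (at most one $m$ per center, hence at most $q\leq\eta M$ memberships). One small fix: your explicit choice $\delta=\eta a/(24qM)$ guarantees $2\delta<a/(12M)$ only when $\eta<q$, so take any $\delta<a/(24M)$ instead; the factor $\eta/q$ is not needed, and for $\eta\geq1$ condition (3) holds trivially anyway.
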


\begin{proof}
Fix $a>0$.
By compactness, we can choose points $z_1,\dots,z_q\in Z$ such that
\[
Z=\bigcup_{i=1}^q B(z_i,a/4).
\]
The integer $q$ is fixed throughout the proof and does not depend
on $\eta$.

Let $\eta>0$.
Choose a positive integer $M$ such that
\[
\frac{q}{M}\leq\eta.
\]
Choose distinct radii $r_1,\dots,r_M\in(a/4,a/2)$, and then choose
$\delta>0$ sufficiently small that the closed intervals
\[
[r_m-\delta,r_m+\delta],
\qquad 1\leq m\leq M,
\]
are pairwise disjoint.

For $1\leq m\leq M$ and $1\leq i\leq q$, set
\[
U_{m,i}:=B(z_i,r_m),
\qquad
A_{m,i}:=
U_{m,i}\setminus\bigcup_{j=1}^{i-1}U_{m,j},
\]
where $A_{m, 1} = U_{m,1}$.
Since $r_m>a/4$, the sets $U_{m,1},\dots,U_{m,q}$ cover $Z$.
Thus, $\mathcal P_m := \{A_{m,1},\dots,A_{m,q}\}$ 
is a finite partition of $Z$.
Each atom $A_{m,i}$ is contained in the ball $U_{m,i}$, so
\[
\operatorname{diam}(A_{m,i},\mathbf d)\leq 2r_m<a.
\]
This proves (1).

For each $1\leq m\leq M$, define
\[
D_m:=
\bigcup_{i=1}^q
\left\{
z\in Z  \, \middle|\, 
\left|\mathbf d(z,z_i)-r_m\right|\leq\delta
\right\}.
\]
We next verify (2).
Fix $m$ and $z\in Z\setminus D_m$, and take any
$z'\in B(z,\delta)$.
For every $1\leq i\leq q$, we have
\[
\left|\mathbf d(z,z_i)-r_m\right|>\delta
\]
and, by the triangle inequality,
\[
\left|\mathbf d(z',z_i)-\mathbf d(z,z_i)\right|
\leq\mathbf d(z,z')<\delta.
\]
Consequently, $\mathbf d(z',z_i)$ and $\mathbf d(z,z_i)$ lie
on the same side of $r_m$, and hence
\[
z'\in U_{m,i}
\quad\Longleftrightarrow\quad
z\in U_{m,i}.
\]
Thus, $z'$ and $z$ belong to the same atom of $\mathcal P_m$.
Since $z'\in B(z,\delta)$ was arbitrary, the whole ball
$B(z,\delta)$ is contained in the atom containing $z$.
This proves (2).

Finally, fix $z\in Z$.
For each $1\leq i\leq q$, the pairwise disjointness of the
intervals $[r_m-\delta,r_m+\delta]$ implies that
\[
\left|\mathbf d(z,z_i)-r_m\right|\leq\delta
\]
holds for at most one index $m$.
Since there are $q$ centers $z_i$, 
the point $z$ belongs to at most $q$ of the
sets $D_1,\dots,D_M$.
Therefore,
\[
\#\{m\in\{1,\dots,M\}:z\in D_m\}
\leq q\leq\eta M.
\]
This proves (3) and completes the proof.
\end{proof}

We next derive entropy estimates from the local containment property
in Lemma \ref{lemma: Partition families with controlled exceptional sets} (2).
These estimates quantify the error caused by the exceptional set.

Let $(Z, \mathbf{d})$ be a compact metric space, 
$E\subset Z$ a finite subset, and 
$p\colon E \to [0,1]$ a map satisfying $\sum_{z\in E}p(z) = 1$.
We regard $p$ as a finitely supported probability measure on $Z$;
for every subset $A\subset Z$, we write
\[
p(A):=\sum_{z\in E\cap A}p(z).
\]
Let $\mathcal{P}$ be a finite partition of $Z$.
For $z\in Z$, we denote by $\mathcal{P}(z)$ 
the atom of $\mathcal{P}$ containing $z$.
Let $I\colon E\to \{1,2,\dots, K\}$ be a map.
We define
the \textbf{conditional Shannon entropy of $\mathcal P$ given $I$} by
\[
H_p(\mathcal P\mid I)
:=
\sum_{\substack{1\leq k\leq K \text{ with}\\p(I^{-1}(k))>0}}
p\left(I^{-1}(k)\right)\,
H_p(\mathcal P\mid I=k),
\]
where the term 
$H_p(\mathcal{P}\mid I=k)$ is defined in the following (standard) way:
Let $\mathcal{P} = \{A_1, \dots, A_a\}$. Then 
\[ H_p(\mathcal{P}\mid I=k) := 
    -\sum_{i=1}^a 
    \frac{p\left(A_i\cap I^{-1}(k)\right)}
    {p\left(I^{-1}(k)\right)} 
    \log  \frac{p\left(A_i\cap I^{-1}(k)\right)}
    {p\left(I^{-1}(k)\right)}.
    \] 
The readers can find basics of Shannon entropy in the book 
of Cover--Thomas \cite[Chapter 2]{Cover--Thomas}.

\begin{lemma} \label{lemma: entropy estimates}
 Let $\delta>0$, and let $D\subset Z$ be a subset such that
 the open ball $B(z,\delta)$ is contained in the atom $\mathcal{P}(z)$
 for every $z\in Z\setminus D$.
  \begin{enumerate}
   \item[(1)] If $\diam\left(I^{-1}(k), \mathbf{d}\right) < \delta$
              for all $k\in \{1,2,\dots, K\}$ then 
              \[ H_p(\mathcal{P}\mid I) \leq p(D) \log |\mathcal{P}|, \]
              where $|\mathcal{P}|$ denotes the number of atoms of 
              $\mathcal{P}$.
   \item[(2)] Suppose we are given a continuous map $R\colon Z\to Z$ 
              (so that $(Z, R)$ is a dynamical system).
              Let $n$ be a positive integer, and
              let $\mathcal{P}^n := \bigvee_{i=0}^{n-1} R^{-i}\mathcal{P}$.
              Suppose 
              that the diameter of $I^{-1}(k)$ with respect to the Bowen 
              metric $\mathbf{d}_n$ is smaller than $\delta$ for every 
              $k \in \{1,2,\dots, K\}$. Then 
              \[ H_p\left(\mathcal{P}^n \, \middle|\, I\right) 
                 \leq \log |\mathcal{P}| \sum_{i=0}^{n-1} p(R^{-i}D). \]
  \end{enumerate}
\end{lemma}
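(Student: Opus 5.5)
Part (1) is a pointwise fact about a single level set; (2) then follows from (1) by subadditivity of conditional entropy over joins.

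For (1), fix $k$ with $p(I^{-1}(k))>0$ and write $G_k=I^{-1}(k)$, a finite subset of $E$. If $G_k$ contains a point $z\notin D$, then $G_k\subset B(z,\delta)$, because $\diam(G_k,\mathbf d)<\delta$. By hypothesis $B(z,\delta)\subset\mathcal P(z)$, so all of $G_k$ lies in one atom and $H_p(\mathcal P\mid I=k)=0$. Otherwise $G_k\subset D$, and we use the trivial bound $H_p(\mathcal P\mid I=k)\le\log|\mathcal P|$. Summing with weights $p(G_k)$ gives
\[ H_p(\mathcal P\mid I)\le\log|\mathcal P|\sum_{k:\,G_k\subset D}p(G_k)\le p(D)\log|\mathcal P|, \]
since the $G_k$ are disjoint and those contained in $D$ have total mass at most $p(D)$.

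For (2), first use the standard inequality $H_p(\bigvee_i\mathcal Q_i\mid I)\le\sum_i H_p(\mathcal Q_i\mid I)$, which is elementary for finitely supported $p$ (see Cover--Thomas). This gives
\[ H_p(\mathcal P^n\mid I)\le\sum_{i=0}^{n-1}H_p(R^{-i}\mathcal P\mid I). \]
Now fix $i$ and argue exactly as in (1), working with $R^i$-images. If $G_k$ contains a point $z$ with $R^iz\notin D$, then every $z'\in G_k$ satisfies $\mathbf d(R^iz',R^iz)\le\mathbf d_n(z',z)<\delta$, because $i<n$. Hence $R^iz'\in B(R^iz,\delta)\subset\mathcal P(R^iz)$, so $G_k$ lies in a single atom of $R^{-i}\mathcal P$ and the conditional entropy vanishes. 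Otherwise $G_k\subset R^{-i}D$, and we bound the conditional entropy by $\log|R^{-i}\mathcal P|\le\log|\mathcal P|$. This yields $H_p(R^{-i}\mathcal P\mid I)\le p(R^{-i}D)\log|\mathcal P|$, and summing over $i$ gives (2).

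I expect no real obstacle here. The only points needing care are the conventions: terms with $p(G_k)=0$ are omitted, empty atoms of $R^{-i}\mathcal P$ are dropped so that its number of nonempty atoms is at most $|\mathcal P|$, and the subadditivity of conditional Shannon entropy is stated for the finitely supported setting.
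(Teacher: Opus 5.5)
Your proposal is correct and follows the paper's proof essentially step for step. In (1), every fiber $I^{-1}(k)$ not contained in $D$ lies in a single atom and contributes zero. In (2), subadditivity of conditional entropy reduces the claim to the terms $H_p(R^{-i}\mathcal P\mid I)$, and each is handled by the same argument with $R^{-i}D$ in place of $D$, using $\mathbf d(R^iz,R^iz')\le\mathbf d_n(z,z')$ for $i<n$.
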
  

\begin{proof}
(1) If $I^{-1}(k)$ is not contained in $D$, then 
$\mathcal{P}(z) = \mathcal{P}(z^\prime)$ for all 
$z, z^\prime \in I^{-1}(k)$ and hence 
$H_p(\mathcal{P}|I=k) = 0$.
Consequently, only fibers entirely contained in $D$ can contribute
to the conditional entropy.
Therefore 
\[ H_p(\mathcal{P}|I) = \sum_{k:\, I^{-1}(k)\subset D}
   p\left(I^{-1}(k)\right) 
   \underbrace{H_p\left(\mathcal{P}\middle|\, I=k\right)}_
   {\leq \log |\mathcal{P}|} \leq
   p(D) \log |\mathcal{P}|. \]
(2) It follows from the subadditivity of conditional entropy that 
\[ H_p\left(\mathcal{P}^n\middle|\, I\right) \leq
   \sum_{i=0}^{n-1} H_p\left(R^{-i}\mathcal{P}\middle|\, I\right). \]
Let $0\leq i < n$.
If $I^{-1}(k)$ is not contained in $R^{-i}D$, then 
$\mathcal{P}(R^i z) = \mathcal{P}(R^i z^\prime)$
for all $z, z^\prime\in I^{-1}(k)$ and hence 
$H_p\left(R^{-i}\mathcal{P}\middle|\, I=k\right) =0$.
Therefore, as in (1)
\[ H_p\left(R^{-i}\mathcal{P}\middle|\, I\right) 
   \leq  p\left(R^{-i}D\right) \log |\mathcal{P}|. \]
   Summing over $i=0,\dots,n-1$ proves (2).
\end{proof}

\subsection{Proof of Theorem \ref{theorem: main theorem}}
\label{subsection: proof of main theorem}

Now we are ready to prove Theorem \ref{theorem: main theorem}
for general $\Omega$.
Let $\pi\colon (X, T)\to (Y, S)$ be an equivariant continuous map 
between dynamical systems.
We take metrics $\mathbf{d}$ and $\mathbf{d}^\prime$ on $X$ and $Y$,
respectively.

Let $0\leq w \leq 1$, 
and let $\Omega \subset X$ be an arbitrary (nonempty) subset.
We would like to prove 
\begin{equation}
 \label{eq: main theorem}
  \FHh^w(\Omega, T) \leq \h^w(\Omega,T). 
\end{equation}

First we consider the case of $w=0$.
As in \S 
\ref{section: proof of the main theorem for Omega = X},
$\h^0(\Omega, T) = \h\left(\pi(\Omega),S\right)$
is the topological entropy of $\pi(\Omega) \subset Y$.
A cover of $\pi(\Omega)$ by open sets of $\mathbf{d}^\prime_n$-diameter 
less than $\varepsilon$ gives a cover of $\Omega$ by the 
same number of $0$-weighted Bowen balls of length $n$ 
and radius $\varepsilon$. Therefore 
\[ \FHh^0\left(\Omega, T\right) \leq
    \h^0\left(\Omega, T\right). \]
Hence we assume $0< w\leq 1$ in the rest of this section. 

If $\FHh^w(\Omega, T)=0$ then \eqref{eq: main theorem} is trivial.
So we assume $\FHh^w(\Omega, T) >0$.
Let $s$ be an arbitrary positive number with 
$s< \FHh^w(\Omega,T)$.
Let $\tau>0$ be an arbitrary positive number.
We would like to prove 
\[ \h^w(\Omega, T) \geq  s-2\tau. \]
Once this is proved, we obtain the conclusion \eqref{eq: main theorem}
by letting $\tau\to 0$ and $s\to \FHh^w(\Omega,T)$.

We divide the remaining argument into six steps.

\medskip
\noindent\textbf{Step 1. Fixing the parameters and partition families.}
By finitary dynamical Frostman\rq{}s lemma 
(Lemma \ref{lemma: finitary dynamical Frostman lemma}), 
choose
$r>0$ and a positive integer $N_0$ such that, 
for every $L\geq N_0$,
there is a finitely supported probability measure 
$p_L$ on $\Omega$
with
\begin{equation}
\label{eq: general proof Frostman bound}
p_L(A)\leq e^{-sn}
\quad\text{if }N_0\leq n\leq L
\text{ and }\diam(A,\mathbf d_n^w)<r.
\end{equation}
Apply the partition lemma (Lemma
\ref{lemma: Partition families with controlled exceptional sets})
to $(X,\mathbf d)$ and $(Y,\mathbf d')$, with the diameter bound
$a=r/2$, and let $q_X$ and $q_Y$ be the resulting bounds on the
numbers of atoms. 
These bounds are independent of the parameter $\eta$ chosen below.
Set
\[
C:=1+\log q_X+\log q_Y.
\]
Choose a positive integer $K$ and then $\eta>0$ such that
\begin{equation}
\label{eq: general proof fixed errors}
\frac{wC}{K}<\frac{\tau}{2},
\qquad
\frac{C\eta}{w^K}<\tau.
\end{equation}
With this choice of $\eta$, 
the partition lemma 
(Lemma \ref{lemma: Partition families with controlled exceptional sets})
gives finite partitions
\[
\mathcal P_1,\dots,\mathcal P_{M_X}
\quad\text{on }X,
\qquad
\mathcal Q_1,\dots,\mathcal Q_{M_Y}
\quad\text{on }Y,
\]
closed exceptional sets $D^X_m$ and $D^Y_m$, and radii
$\delta_X,\delta_Y>0$ satisfying its conclusions.
Set
\[
\delta:=\min\{\delta_X,\delta_Y\}.
\]
Each $\mathcal P_m$ has at most $q_X$ atoms, each $\mathcal Q_m$
has at most $q_Y$ atoms, and all these atoms have diameter less
than $r/2$ with respect to $\mathbf{d}$ and $\mathbf{d}^\prime$ 
respectively.
All the parameters and families chosen so far are independent of 
the parameter $L$.
(The probability measure $p_L$ may depend on $L$, but
we have not chosen it so far.)

\medskip
\noindent\textbf{Step 2. Choosing partitions for a finite Frostman measure.}
Let $L$ be any sufficiently large positive integer such that
\begin{equation}
\label{eq: general proof choice of L}
w^K L\geq N_0,
\qquad
\frac{C}{w^K L}<\frac{\tau}{2}.
\end{equation}
Take $E_L\subset\Omega$ and $p_L$ as in
finitary dynamical Frostman\rq{}s lemma
(Lemma \ref{lemma: finitary dynamical Frostman lemma}), 
and write
$E=E_L$ and $p=p_L$ for brevity.

\begin{claim}
\label{claim: general proof partition selection}
There are partitions $\mathcal{P}_m$ and $\mathcal{Q}_{m^\prime}$
$(1\leq m \leq M_X$, $1\leq m^\prime \leq M_Y$) such that 
the corresponding exceptional sets $D^X_{m}$ and $D^Y_{m^\prime}$ satisfy
\begin{equation}
\label{eq: general proof boundary visits}
\sum_{i=0}^{L-1}p(T^{-i}D^X_m)\leq\eta L,
\qquad
\sum_{i=0}^{L-1}p(\pi^{-1}S^{-i}D^Y_{m^\prime})\leq\eta L.
\end{equation}
\end{claim}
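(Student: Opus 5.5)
This is a finite averaging argument, using property (3) of Lemma~\ref{lemma: Partition families with controlled exceptional sets}. I treat the partitions on $X$ and on $Y$ separately, since the two conditions in \eqref{eq: general proof boundary visits} do not interact.

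For $X$, average the quantity $\sum_{i=0}^{L-1}p(T^{-i}D^X_m)$ over $m$ and swap the order of summation:
\[
\frac{1}{M_X}\sum_{m=1}^{M_X}\sum_{i=0}^{L-1}p(T^{-i}D^X_m)
=\sum_{i=0}^{L-1}\sum_{x\in E}p(x)\,\frac{\#\{m: T^ix\in D^X_m\}}{M_X}.
\]
By Lemma~\ref{lemma: Partition families with controlled exceptional sets}~(3), applied at the point $T^ix\in X$, each fraction is at most $\eta$. Since $\sum_{x\in E}p(x)=1$, the right-hand side is at most $\eta L$. The average of finitely many numbers is at least their minimum, so some index $m$ satisfies the first inequality.

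For $Y$, the same computation applies with the points $S^i\pi(x)\in Y$. Here I use that $p(\pi^{-1}S^{-i}D^Y_{m'})=\sum_{x\in E}p(x)\mathbf 1_{D^Y_{m'}}(S^i\pi(x))$, together with property (3) for the family $D^Y_1,\dots,D^Y_{M_Y}$. This yields an $m'$ satisfying the second inequality.

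There is no real obstacle. The only point to check is that the finite support of $p$ makes all sums finite, so the interchange of summation is trivially justified. Note also that the choice of $m$ and $m'$ depends on $p=p_L$, but the families and the radius $\delta$ do not.
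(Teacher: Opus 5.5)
Your proposal is correct and follows the paper's proof: you average the boundary-visit sum over the family, interchange the finite sums, and bound the overlap fraction at each point $T^ix$ (resp.\ $S^i\pi(x)$) by $\eta$ using condition (3) of Lemma~\ref{lemma: Partition families with controlled exceptional sets}. You then pick an index not exceeding the average, making the two selections independently, exactly as the paper does.
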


\begin{proof}
For the family on $X$, condition (3) of the partition lemma gives
\begin{align*}
\frac1{M_X}\sum_{m=1}^{M_X}\sum_{i=0}^{L-1}p(T^{-i}D^X_m)
&=\sum_{i=0}^{L-1}\sum_{x\in E}p(x)
  \frac1{M_X}\sum_{m=1}^{M_X}\mathbf1_{D^X_m}(T^ix)\\
&\leq\eta L.
\end{align*}
Hence at least one member of this family satisfies the first
inequality in \eqref{eq: general proof boundary visits}.
The same calculation for the family on $Y$, using $S^i\pi(x)$
in place of $T^ix$, gives the second inequality.
\end{proof}

Fix these partitions $\mathcal{P}_m$ and $\mathcal{Q}_{m^\prime}$
for the rest of the argument with this $L$.
Write $\mathcal{P} = \mathcal{P}_m$, 
$\mathcal{Q} = \mathcal{Q}_{m^\prime}$,
$D_X = D^X_m$ and $D_Y = D^Y_{m^\prime}$ for brevity.
We note that 
the partitions $\mathcal P,\mathcal Q$ and the exceptional sets
$D_X,D_Y$ may depend on $L$ and $p$, 
but the bounds on the numbers of atoms and
the radius $\delta$ do not.

So far, we have introduced several parameters and partitions.
The order in which they are chosen is important but may not be
immediately clear.
We therefore summarize the choices made in Steps~1 and~2
in the table below.
In making these choices, we keep the dynamical systems $(X,T)$
and $(Y,S)$, the map $\pi$, the subset $\Omega$, 
and the parameters $w,s,\tau$ fixed.

\begin{center}
\small
\renewcommand{\arraystretch}{1.3}
\setlength{\tabcolsep}{5pt}
\begin{tabularx}{\linewidth}{@{}c
  >{\raggedright\arraybackslash}p{0.25\linewidth}
  >{\raggedright\arraybackslash}X@{}}
\toprule
\textbf{Order} & \textbf{Objects chosen} & \textbf{Choice or requirement} \\
\midrule
\multicolumn{3}{@{}l}{\emph{Choices fixed as $L\to\infty$}} \\[0.3ex]

1 & $r,N_0$
  & Finitary Frostman lemma with exponent $s$. \\[0.4ex]

2 & $q_X,q_Y,C$
  & Atom bounds for diameter less than $r/2$, independent of $\eta$;
    $C=1+\log q_X+\log q_Y$. \\[0.4ex]

3 & $K$
  & Chosen so that $wC/K<\tau/2$. \\[0.4ex]

4 & $\eta$
  & Chosen so that $C\eta/w^K<\tau$. \\[0.4ex]

5 & Partition families, exceptional sets, and $\delta$
  & The partition lemma with diameter bound $r/2$ and overlap
    bound $\eta$; $\delta=\min\{\delta_X,\delta_Y\}$. \\

\midrule
\multicolumn{3}{@{}l}{\emph{The large time parameter and the choices depending on it}} \\[0.3ex]

6 & $L$
  & Any sufficiently large integer satisfying $w^K L\geq N_0$
    and $C/(w^K L)<\tau/2$. \\[0.4ex]

7 & $E_L,p_L$
  & Frostman probability measure $p_L$ supported on a finite set
    $E_L\subset\Omega$, for the time range $N_0\leq n\leq L$. \\[0.4ex]

8 & $\mathcal P,\mathcal Q,D_X,D_Y$
  & Selected using $p_L$ so that each of the two boundary-visit
    sums is at most $\eta L$. \\

\bottomrule
\end{tabularx}
\end{center}

In particular, the common radius $\delta$ is fixed before $L$ and
$p_L$ are chosen, although the selected partitions and their
exceptional sets may depend on both.
This distinction will be essential when we take the upper limit
in time at the end of the proof.

\medskip
\noindent\textbf{Step 3. Entropy at two different time scales.}
For every positive integer $n$, write
\[
\mathcal P^n:=\bigvee_{i=0}^{n-1}T^{-i}\mathcal P,
\qquad
\mathcal Q^n:=\bigvee_{i=0}^{n-1}S^{-i}\mathcal Q,
\qquad
\mathcal A_n:=\pi^{-1}(\mathcal Q^n).
\]
For $1\leq n\leq L$, set
\begin{equation}
\label{eq: general proof entropy sequences}
a_n:=H_p(\mathcal A_n),
\qquad
b_n:=H_p(\mathcal P^n\mid\mathcal A_n).
\end{equation}
These quantities are nonnegative, and
\begin{equation}
\label{eq: general proof linear bound}
0\leq b_n\leq H_p(\mathcal P^n)
\leq n\log|\mathcal P|\leq Cn.
\end{equation}

\begin{claim}
\label{claim: general proof mixed entropy}
For every integer $n$ with $N_0\leq n\leq L$, we have
\[
a_n+b_{\lceil wn\rceil}\geq sn.
\]
\end{claim}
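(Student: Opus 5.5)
We bound the joint entropy $H_p(\mathcal A_n\vee\mathcal P^{\lceil wn\rceil})$ from below using the Frostman bound \eqref{eq: general proof Frostman bound}, and then split it by the chain rule. Fix $n$ with $N_0\leq n\leq L$ and set $n'=\lceil wn\rceil\leq n$. Consider the partition $\mathcal R:=\mathcal A_n\vee\mathcal P^{n'}$ of $X$. By the chain rule for Shannon entropy,
\[
H_p(\mathcal R)=H_p(\mathcal A_n)+H_p(\mathcal P^{n'}\mid\mathcal A_n)=a_n+H_p(\mathcal P^{n'}\mid\mathcal A_n).
\]
The relevant term is $H_p(\mathcal P^{n'}\mid\mathcal A_{n'})=b_{n'}$. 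The partition $\mathcal A_n=\pi^{-1}(\mathcal Q^n)$ refines $\mathcal A_{n'}$ because $n'\leq n$, and conditioning on a finer partition can only decrease entropy. Hence
\[
H_p(\mathcal P^{n'}\mid\mathcal A_n)\leq H_p(\mathcal P^{n'}\mid\mathcal A_{n'})=b_{n'},
\]
and therefore $a_n+b_{n'}\geq H_p(\mathcal R)$. This is valid even when $n'=0$, using the conventions $\mathcal P^0=\{X\}$ and $b_0=0$; since $w>0$, however, $n'\geq1$ anyway.

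It then suffices to show $H_p(\mathcal R)\geq sn$, and for this we estimate the diameter of the atoms of $\mathcal R$ in $\mathbf d^w_n$. Let $R$ be an atom of $\mathcal R$ and take $x,x'\in R$. For $0\leq i<n$, the points $S^i\pi(x)$ and $S^i\pi(x')$ lie in the same atom of $\mathcal Q$, which has $\mathbf d'$-diameter less than $r/2$. For $0\leq i<n'$, the points $T^ix$ and $T^ix'$ lie in the same atom of $\mathcal P$, which has $\mathbf d$-diameter less than $r/2$. Thus $\mathbf d^w_n(x,x')<r/2$, using that $\mathbf d^w_n$ involves $\mathbf d_{\lceil wn\rceil}=\mathbf d_{n'}$, and hence $\diam(R,\mathbf d^w_n)\leq r/2<r$. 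The small point to be careful about is that a supremum of values each below $r/2$ is at most $r/2$ rather than strictly below it; the slack between $r/2$ and $r$ absorbs this, so the strict inequality demanded by the Frostman lemma holds.

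By \eqref{eq: general proof Frostman bound}, every atom $R$ of $\mathcal R$ therefore satisfies $p(R)\leq e^{-sn}$, because $N_0\leq n\leq L$. Consequently
\[
H_p(\mathcal R)=-\sum_R p(R)\log p(R)\geq\sum_R p(R)\,sn=sn,
\]
and combining this with the first step gives $a_n+b_{\lceil wn\rceil}\geq sn$.

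The main step to get right is the monotonicity $H_p(\mathcal P^{n'}\mid\mathcal A_n)\leq b_{n'}$. It rests on $\mathcal A_n$ refining $\mathcal A_{n'}$, which holds because $\mathcal Q^n$ refines $\mathcal Q^{n'}$. Everything else is the standard "atoms have small mass, so entropy is large" argument.
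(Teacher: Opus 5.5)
Your proof is correct and follows the paper's argument essentially step for step: you form $\mathcal P^{\lceil wn\rceil}\vee\mathcal A_n$, apply the Frostman bound to its atoms (whose $\mathbf d^w_n$-diameter is less than $r$) to get entropy at least $sn$, and then bound this entropy by $a_n+b_{\lceil wn\rceil}$ via the chain rule together with the fact that $\mathcal A_n$ refines $\mathcal A_{\lceil wn\rceil}$. Your extra care about the strict diameter inequality ($\le r/2<r$) is a harmless detail that the paper leaves implicit.
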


\begin{proof}
Set $m=\lceil wn\rceil$ and consider the partition
\[
\mathcal R_n:=\mathcal P^m\vee\mathcal A_n.
\]
Every atom of $\mathcal R_n$ has $\mathbf d_n^w$-diameter less
than $r$. 
Thus, \eqref{eq: general proof Frostman bound} gives
$p(A)\leq e^{-sn}$ for every atom $A$ of $\mathcal R_n$.
It follows that
\[
H_p(\mathcal R_n)
=\sum_{\substack{A\in\mathcal R_n\\p(A)>0}}
  p(A)\log\frac1{p(A)}
\geq sn.
\]
Since $m\leq n$, the partition $\mathcal A_n$ refines
$\mathcal A_m$. Conditioning on this finer partition cannot
increase entropy, so
\begin{align*}
H_p(\mathcal R_n)
&=H_p(\mathcal A_n)+H_p(\mathcal P^m\mid\mathcal A_n)\\
&\leq H_p(\mathcal A_n)+H_p(\mathcal P^m\mid\mathcal A_m)\\
&=a_n+b_m.
\end{align*}
This proves the claim.
\end{proof}

\medskip
\noindent\textbf{Step 4. Selecting a time scale.}

\begin{claim}
\label{claim: general proof good scale}
There is an integer $n$ with $w^K L\leq n\leq L$ such that
\[
\frac{a_n+wb_n}{n}>s-\tau.
\]
\end{claim}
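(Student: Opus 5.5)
We will apply the scale-descent inequality (Lemma \ref{lemma: scale-descent inequality}) directly to the sequences $a_n=H_p(\mathcal A_n)$ and $b_n=H_p(\mathcal P^n\mid\mathcal A_n)$ defined in \eqref{eq: general proof entropy sequences}, using the version that only requires the sequences on restricted ranges (see the remark following that lemma). Set $n_0=L$ and $n_{k+1}=\lceil wn_k\rceil$ for $0\leq k<K$. Since $n_k\geq w^kL\geq w^KL\geq N_0$ by \eqref{eq: general proof choice of L}, every $n_k$ with $0\leq k\leq K$ lies in the interval $[N_0,L]$.

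We first check the hypotheses of the lemma on these indices. The linear bound $b_n\leq Cn$ holds for all $1\leq n\leq L$ by \eqref{eq: general proof linear bound}; in particular it holds for $n=n_0,\dots,n_K$. The mixed inequality $a_n+b_{\lceil wn\rceil}\geq sn$ holds for every $n$ with $N_0\leq n\leq L$ by Claim \ref{claim: general proof mixed entropy}; in particular it holds for $n=n_0,\dots,n_{K-1}$. The hypotheses $0<w\leq1$, $s\geq0$, $C\geq0$, and the nonnegativity of $a_n$ and $b_n$ are already in place.

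The lemma then yields some $k$ with $0\leq k<K$ such that
\[
\frac{a_{n_k}+wb_{n_k}}{n_k}\geq s-\frac{wC}{K}-\frac{C}{w^KL}.
\]
By \eqref{eq: general proof fixed errors}, $wC/K<\tau/2$, and by \eqref{eq: general proof choice of L}, $C/(w^KL)<\tau/2$. The right-hand side is therefore strictly greater than $s-\tau$. Taking $n=n_k$, which satisfies $w^KL\leq n\leq L$, proves the claim.

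No step here is a genuine obstacle. The only care needed is the bookkeeping: every index $n_k$ must lie in $[N_0,L]$, so that Claim \ref{claim: general proof mixed entropy} applies at $n_0,\dots,n_{K-1}$. This is guaranteed by the choice $w^KL\geq N_0$ in Step 2.
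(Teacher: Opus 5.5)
Your proof is correct and is essentially the paper's argument: both apply the scale-descent inequality along $n_0=L$, $n_{k+1}=\lceil wn_k\rceil$ and use $n_k\geq w^KL\geq N_0$. The only difference is that the paper sets $\widetilde a_n=sn$ for $n<N_0$ so that the lemma applies in its stated form, while you invoke the restricted-range observation after the lemma; the paper's footnote explicitly mentions this as an equivalent alternative.
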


\begin{proof}
Notice that the estimate in
Claim \ref{claim: general proof mixed entropy} is available only
for $n\geq N_0$. 
To apply the scale-descent inequality 
(Lemma \ref{lemma: scale-descent inequality}) as stated, 
set\footnote{This auxiliary modification 
is only needed to apply the lemma
in its stated form. It can be omitted by using the observation
at the end of \S\ref{subsection: scale-descent inequality}.}
\[
\widetilde a_n:=
\begin{cases}
sn & (1\leq n<N_0),\\
a_n & (N_0\leq n\leq L).
\end{cases}
\]
Then $(\widetilde a_n)_{1\leq n\leq L}$ and $(b_n)_{1\leq n\leq L}$ are nonnegative,
$b_n\leq Cn$, and
\[
\widetilde a_n+b_{\lceil wn\rceil}\geq sn
\qquad (1\leq n\leq L).
\]
Define $n_0=L$ and $n_{k+1}=\lceil wn_k\rceil$ for $0\leq k<K$.
Lemma \ref{lemma: scale-descent inequality} gives an index $k<K$
such that
\[
\frac{\widetilde a_{n_k}+wb_{n_k}}{n_k}
\geq s-\frac{wC}{K}-\frac{C}{w^K L}>s-\tau.
\]
Since $n_k\geq w^K L\geq N_0$, we have
$\widetilde a_{n_k}=a_{n_k}$. Taking $n=n_k$ proves the claim.
\end{proof}

\medskip
\noindent\textbf{Step 5. Comparing entropy with two-step covering numbers.}

\begin{claim}
\label{claim: general proof covering comparison}
For every integer $n$ with $1\leq n\leq L$, we have
\begin{align}
\label{eq: general proof covering comparison}
a_n+wb_n
&\leq \log\#^w(\Omega,n,\delta)
 +w\log|\mathcal P|\sum_{i=0}^{n-1}p(T^{-i}D_X)\notag\\
&\hspace{18mm}
 +\log|\mathcal Q|\sum_{i=0}^{n-1}p(\pi^{-1}S^{-i}D_Y)\\
&\leq \log\#^w(\Omega,n,\delta)+C\eta L.\notag
\end{align}
\end{claim}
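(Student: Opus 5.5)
The plan is to compare the entropies with an almost optimal two-step cover by working with index maps on the finite set $E$ and using elementary Shannon-entropy identities.

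\textbf{The cover and the index maps.} Fix $1\leq n\leq L$. Take open sets $V_1,\dots,V_\ell$ covering $\pi(\Omega)$ with $\diam(V_j,\mathbf d'_n)<\delta$ that attain $\#^w(\Omega,n,\delta)$; discard those $V_j$ with $\Omega\cap\pi^{-1}(V_j)=\emptyset$. For each $j$, cover $\Omega\cap\pi^{-1}(V_j)$ by $t_j=\#(\Omega\cap\pi^{-1}(V_j),\mathbf d_n,\delta)$ open sets $U_{j1},\dots,U_{jt_j}$ of $\mathbf d_n$-diameter $<\delta$. Then
\[
\sum_j t_j^w=\#^w(\Omega,n,\delta).
\]
Since $E\subset\Omega$, define $J\colon E\to\{1,\dots,\ell\}$ by choosing the smallest $j$ with $\pi(x)\in V_j$. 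Define $I(x)=(J(x),k)$, where $k$ is the smallest index with $x\in U_{J(x)k}$. Thus $I$ refines $J$. The fibres of $J$ have $\pi$-images of $\mathbf d'_n$-diameter $<\delta$, and the fibres of $I$ have $\mathbf d_n$-diameter $<\delta$.

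\textbf{The entropy chain.} Regard all partitions and index maps as random variables under $p$. First,
\[
H_p(\mathcal P^n\mid\mathcal A_n)\leq H_p(\mathcal P^n\vee I\mid\mathcal A_n)=H_p(J\mid\mathcal A_n)+H_p(\mathcal P^n\vee I\mid\mathcal A_n\vee J),
\]
where the equality uses that $J$ is a function of $I$. Since $w\leq 1$, this gives
\[
a_n+wb_n\leq H_p(\mathcal A_n)+H_p(J\mid \mathcal A_n)+wH_p(\mathcal P^n\vee I\mid \mathcal A_n\vee J)
=H_p(\mathcal A_n\vee J)+wH_p(\mathcal P^n\vee I\mid\mathcal A_n\vee J).
\]
Next expand $H_p(\mathcal A_n\vee J)=H_p(J)+H_p(\mathcal A_n\mid J)$. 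For the second term, drop $\mathcal A_n$ from the conditioning, which can only increase it, and use that $I$ determines $J$:
\[
H_p(\mathcal P^n\vee I\mid \mathcal A_n\vee J)\leq H_p(I\mid J)+H_p(\mathcal P^n\mid I).
\]
Altogether,
\[
a_n+wb_n\leq \bigl[H_p(J)+wH_p(I\mid J)\bigr]+H_p(\mathcal A_n\mid J)+wH_p(\mathcal P^n\mid I).
\]

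\textbf{The main term.} Write $p_j=p(J^{-1}(j))$. Since $J=j$ forces $I$ to take at most $t_j$ values, $H_p(I\mid J=j)\leq\log t_j$. Hence
\[
H_p(J)+wH_p(I\mid J)\leq\sum_j p_j\log\frac{t_j^w}{p_j}\leq\log\sum_j t_j^w=\log\#^w(\Omega,n,\delta),
\]
where the last step is Jensen's inequality (Gibbs' inequality).

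\textbf{The boundary error terms.} Apply Lemma \ref{lemma: entropy estimates} (2) to $(X,T)$ with the index map $I$ and the set $D_X$. This is allowed because the fibres of $I$ have $\mathbf d_n$-diameter $<\delta\leq\delta_X$, and it gives
\[
H_p(\mathcal P^n\mid I)\leq\log|\mathcal P|\sum_{i<n}p(T^{-i}D_X).
\]
For $H_p(\mathcal A_n\mid J)$, I will rerun the same argument on $Y$ rather than quote the lemma, since $J$ lives on $E\subset X$. By subadditivity, $H_p(\pi^{-1}\mathcal Q^n\mid J)\leq\sum_{i<n}H_p(\pi^{-1}S^{-i}\mathcal Q\mid J)$. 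If a fibre $J^{-1}(j)$ is not contained in $\pi^{-1}S^{-i}D_Y$, then some point of $\pi(J^{-1}(j))$ lies outside $S^{-i}D_Y$. Because $\pi(J^{-1}(j))$ has $\mathbf d'_n$-diameter $<\delta\leq\delta_Y$, its $S^i$-image lies in one $\delta$-ball around that point, hence in a single atom of $\mathcal Q$. So such a fibre contributes zero, and
\[
H_p(\mathcal A_n\mid J)\leq\log|\mathcal Q|\sum_{i<n}p(\pi^{-1}S^{-i}D_Y).
\]
These bounds give the first inequality of the claim. For the second, use $n\leq L$, the bounds \eqref{eq: general proof boundary visits} from Claim \ref{claim: general proof partition selection}, $w\leq1$, and $\log|\mathcal P|+\log|\mathcal Q|\leq\log q_X+\log q_Y\leq C$.

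I expect the main obstacle to be the conditioning chain. The point that needs care is that $b_n$ is conditioned on the dynamical partition $\mathcal A_n$, while the cover's structure is encoded by $J$. The trick of joining $J$ into both terms, and using $w\leq1$ to absorb the cost $wH_p(J\mid\mathcal A_n)$ into the unweighted part, is where I would concentrate the verification.
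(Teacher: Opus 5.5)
Your proposal is correct and follows essentially the paper's route. It uses the same cover-label maps (your $J$ and $I$ are the paper's $I$ and $(I,J)$), reaches the same intermediate bound $H_p(J)+wH_p(I\mid J)+H_p(\mathcal A_n\mid J)+wH_p(\mathcal P^n\mid I)$, bounds the two error terms with Lemma~\ref{lemma: entropy estimates}~(2), and bounds the label entropy by concavity as the paper does. The differences are cosmetic: the paper splits $a_n+wb_n=wH_p(\mathcal P^n\vee\mathcal A_n)+(1-w)H_p(\mathcal A_n)$ where you absorb $wH_p(J\mid\mathcal A_n)$ using $w\le 1$; it pushes forward to $\pi_*p$ on $Y$ where you rerun the lemma's argument; and it takes the infimum over covers at the end where you fix a minimizing cover (which exists, since the attainable values of $\sum_j t_j^w$ below any bound form a finite set).
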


\begin{proof}
Consider any finite open cover $V_1,\dots,V_\ell$ of $\pi(\Omega)$
with $\diam(V_j,\mathbf d'_n)<\delta$.
Discard sets disjoint from $\pi(\Omega)$ and set
\[
t_j:=\#(\Omega\cap\pi^{-1}(V_j),\mathbf d_n,\delta)\geq1.
\]
Choose open subsets $U_{j1},\dots,U_{jt_j}$ of $X$ such that
\[
\Omega\cap\pi^{-1}(V_j)\subset\bigcup_{k=1}^{t_j}U_{jk},
\qquad
\diam(U_{jk},\mathbf d_n)<\delta.
\]
For each $y\in\pi(E)$, 
choose an index $I_Y(y)\in \{1,2,\dots,\ell\}$ with
$y\in V_{I_Y(y)}$, and define $I(x):=I_Y(\pi(x))$ for $x\in E$.
Next, choose $J(x)\in\{1,\dots,t_{I(x)}\}$ such that
$x\in U_{I(x),J(x)}$.
Thus, the fibers of $I_Y$ have $\mathbf d'_n$-diameter less
than $\delta$, and the fibers of $(I,J)$ have
$\mathbf d_n$-diameter less than $\delta$.

Let $\bar p=\pi_*p$ be the probability measure on the finite set
$\pi(E)$, so that $\bar p(B)=p(\pi^{-1}(B))$ for $B\subset Y$.
Applying Lemma \ref{lemma: entropy estimates} (2) on $Y$ gives
\begin{equation}
\label{eq: general proof base conditional error}
H_p(\mathcal A_n\mid I)
=H_{\bar p}(\mathcal Q^n\mid I_Y)
\leq \log|\mathcal Q|
\sum_{i=0}^{n-1}p(\pi^{-1}S^{-i}D_Y).
\end{equation}
The same lemma on $X$, applied to the finite-valued map $(I,J)$,
gives
\begin{equation}
\label{eq: general proof fiber conditional error}
H_p(\mathcal P^n\mid I,J)
\leq\log|\mathcal P|\sum_{i=0}^{n-1}p(T^{-i}D_X).
\end{equation}
By the chain rule, conditional subadditivity, and the fact that
conditioning cannot increase entropy,
\begin{align*}
H_p(\mathcal P^n\vee\mathcal A_n)
&\leq H_p(I,J)+H_p(\mathcal P^n\vee\mathcal A_n\mid I,J)\\
&\leq H_p(I,J)+H_p(\mathcal P^n\mid I,J)
             +H_p(\mathcal A_n\mid I),\\
H_p(\mathcal A_n)
&\leq H_p(I)+H_p(\mathcal A_n\mid I).
\end{align*}
Recall that
\[ a_n = H_p(\mathcal{A}_n), \qquad 
   b_n= H_p(\mathcal{P}^n\mid \mathcal{A}_n) = 
       H_p(\mathcal{P}^n \vee \mathcal{A}_n) - H_p(\mathcal{A}_n). \]
Since $0<w\leq1$, the above estimates imply
\begin{align} 
  \label{eq: general proof cover label entropy}
a_n+wb_n
&=wH_p(\mathcal P^n\vee\mathcal A_n)
  +(1-w)H_p(\mathcal A_n)\notag\\
&\leq w H_p(I,J) + (1-w) H_p(I) 
      +wH_p(\mathcal P^n\mid I,J)+H_p(\mathcal A_n\mid I)
      \notag \\
&= H_p(I)+wH_p(J\mid I)
  +wH_p(\mathcal P^n\mid I,J)+H_p(\mathcal A_n\mid I). 
\end{align}

It remains to bound the entropy of the cover labels.
Set $\lambda_j:=p(I=j)$.
Conditioned on $I=j$, the label $J$ takes at most $t_j$ values.
Hence, by concavity of the logarithm,
\begin{align*}
H_p(I)+wH_p(J\mid I)
&= \sum_{j:\, \lambda_j>0} \lambda_j \log \frac{1}{\lambda_j} + 
   w \sum_{j:\, \lambda_j>0} \lambda_j 
   \underbrace{H_p(J \mid I=j)}_{\leq \log t_j} \\
&\leq\sum_{j:\,\lambda_j>0}
   \lambda_j\log\frac{t_j^w}{\lambda_j}\\
&\leq\log\left(\sum_{j:\,\lambda_j>0}t_j^w\right)
\leq\log\left(\sum_{j=1}^\ell t_j^w\right).
\end{align*}
Combining this bound with
\eqref{eq: general proof base conditional error}--\eqref{eq: general proof cover label entropy}
and taking the infimum over the open covers $(V_j)$ proves the
first inequality in \eqref{eq: general proof covering comparison}.
For the second inequality, use $n\leq L$,
\eqref{eq: general proof boundary visits}, and
\[
w\log|\mathcal P|+\log|\mathcal Q|
\leq\log q_X+\log q_Y\leq C.
\]
\end{proof}

\medskip
\noindent\textbf{Step 6. Completion of the proof.}
Choose $n=n(L) \in [w^K L, L]$ as in Claim \ref{claim: general proof good scale}.
Claim \ref{claim: general proof covering comparison} gives
\[
\frac{\log\#^w(\Omega,n(L),\delta)}{n(L)}
\geq \frac{a_{n(L)}+wb_{n(L)}}{n(L)}
       -\frac{C\eta L}{n(L)}
>s-\tau-\frac{C\eta}{w^K}
>s-2\tau.
\]
This holds for every sufficiently large $L$.
The radius $\delta$ is independent of $L$, and
$n(L)\geq w^K L\to\infty$ as $L\to\infty$.
Therefore,
\[
\limsup_{n\to\infty}
\frac{\log\#^w(\Omega,n,\delta)}{n}\geq s-2\tau.
\]
By the definition of covering weighted topological entropy,
\[
\h^w(\Omega,T)\geq s-2\tau.
\]
Letting $\tau\to0$ and then letting $s$ increase to
$\FHh^w(\Omega,T)$ proves \eqref{eq: main theorem}.
This completes the proof of Theorem 
\ref{theorem: main theorem}.

\section{Example} \label{section: example}

The purpose of this section is to construct an equivariant continuous map 
$\pi\colon (X, T) \to (Y, S)$ between dynamical systems and 
a closed (noninvariant) subset $\Omega \subset X$
that satisfy 
\[ \hinf^w(\Omega, T) < \FHh^w(\Omega, T) < \h^w(\Omega, T) \]
for some weight $w\in (0,1)$.
In particular, this shows that $\h^w(\Omega,T)$ cannot be replaced by
$\hinf^w(\Omega,T)$ in Theorem \ref{theorem: main theorem}.

The basic ingredient is the following elementary combinatorial property of the set
\[
I
:=
\bigcup_{k=0}^{\infty}
\left([4^k,2\cdot 4^k)\cap\mathbb{N}\right)
=
\{1\}\cup\{4,5,6,7\}\cup\{16,17,\dots,31\}\cup\cdots.
\]

\begin{lemma} \label{lemma: combinatorial property of I}
Set
$A(n):=\left|I\cap\{1,2,\dots,n\}\right|$.
Then the following statements hold.
\begin{enumerate}
\item
\[
\liminf_{n\to\infty}\frac{A(n)}{n}
=
\frac{1}{3},
\qquad
\limsup_{n\to\infty}\frac{A(n)}{n}
=
\frac{2}{3}.
\]

\item
For every $n\geq1$,
\[
0
\leq
A(n)+2A\left(\left\lceil\frac{n}{2}\right\rceil\right)-n
\leq
2.
\]
Equivalently,
\[
A(n)+2A\left(\left\lceil\frac{n}{2}\right\rceil\right)
\in
\{n,n+1,n+2\}.
\]
\end{enumerate}
\end{lemma}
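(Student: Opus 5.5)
The plan is to compute $A(n)$ in closed form on each dyadic-type block and then verify both statements by direct case analysis.

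For $k\geq 0$, the block $[4^k,2\cdot4^k)$ contains $4^k$ integers. Hence
\[
A(4^k-1)=\sum_{i<k}4^i=\frac{4^k-1}{3}.
\]
For $n$ with $4^k\le n<4^{k+1}$, the formula is piecewise linear:
\[
A(n)=
\begin{cases}
\dfrac{4^k-1}{3}+(n-4^k+1) & (4^k\le n<2\cdot4^k),\\[1ex]
\dfrac{4^k-1}{3}+4^k=\dfrac{4^{k+1}-1}{3} & (2\cdot4^k\le n<4^{k+1}).
\end{cases}
\]

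\textbf{Part (1).} The ratio $A(n)/n$ increases on the first range and decreases on the second. The local extremes therefore occur at the endpoints:
\begin{itemize}
\item at $n=2\cdot4^k-1$ the ratio is $\frac{(4^{k+1}-1)/3}{2\cdot4^k-1}\to\frac23$;
\item at $n=4^k-1$ (equivalently $n=4^{k+1}-1$) the ratio is $\frac{(4^k-1)/3}{4^k-1}=\frac13$.
\end{itemize}
Between these, the ratio lies within $o(1)$ of $[\frac13,\frac23]$. This gives $\liminf=\frac13$ and $\limsup=\frac23$.

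\textbf{Part (2).} Set $m=\lceil n/2\rceil$ and $F(n)=A(n)+2A(m)-n$. I would argue by induction on $n$, tracking the increment
\[
F(n+1)-F(n)=\mathbf 1_I(n+1)+2\bigl(A(\lceil (n+1)/2\rceil)-A(\lceil n/2\rceil)\bigr)-1.
\]
When $n$ is even, $\lceil (n+1)/2\rceil=n/2+1$, so the middle term is $2\cdot\mathbf 1_I(n/2+1)$. When $n$ is odd, $\lceil (n+1)/2\rceil=\lceil n/2\rceil$, so the middle term vanishes.

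The structural fact driving the induction is this: $j\in I$ iff $j$ lies in some $[4^k,2\cdot4^k)$, and then $2j$ and $2j-1$ lie in $[2\cdot4^k,4^{k+1})$, which is the complement of $I$ (for $2j-1\ge 4^k\cdot 2-1$, care is needed at the boundary). So, roughly, doubling swaps membership in $I$ and its complement.

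The increments therefore pair up over consecutive $n$:
\begin{itemize}
\item over the pair $n=2j-1\to 2j\to 2j+1$, the contributions $\mathbf 1_I(2j)-1$ and $\mathbf 1_I(2j+1)+2\cdot\mathbf 1_I(j+1)-1$ should sum to $0$ away from block boundaries;
\item so $F(2j+1)=F(2j-1)$ generically, with small bounded drifts at the boundaries $j+1=4^k$ and $j+1=2\cdot 4^k$.
\end{itemize}

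\textbf{Fallback.} If the induction becomes messy, I would instead plug the closed form directly into $F$, splitting into cases according to which half-block $n$ lies in and which half-block $m$ lies in. Since $m\approx n/2$, $m$ lies in the block below, or in the first half of the same $k$, so only about four cases arise. Each case gives $F(n)$ as an explicit small quantity coming from the ceiling, taking values in $\{0,1,2\}$.

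\textbf{Main obstacle.} The main difficulty will be the boundary bookkeeping in part (2): getting exact constants near $n\approx 4^k$ and $n\approx2\cdot4^k$, where the ceiling interacts with block endpoints. I would check small cases, $n\le 32$, by hand to calibrate the case formulas.
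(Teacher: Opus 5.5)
Your closed form for $A(n)$ and your argument for part (1) match the paper. Your fallback for part (2) is exactly the paper's proof: substitute the closed form into $F(n)=A(n)+2A(\lceil n/2\rceil)-n$, splitting into two cases. Either $n\in[4^k,2\cdot4^k)$, where $\lceil n/2\rceil\in[2\cdot4^{k-1},4^k]$, or $n\in[2\cdot4^k,4^{k+1})$, where $\lceil n/2\rceil\in[4^k,2\cdot4^k]$.

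One slip in your case description: $\lceil n/2\rceil$ does not always lie in the block below or in the first half of block $k$. At $n=4^{k+1}-1$ it equals $2\cdot4^k$, which is in the second half, where $A$ is constant. Using the first-half formula there is off by one, and this endpoint is exactly where $F$ drops to $0$.

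Your primary route, the increment induction, is genuinely different from the paper's and does work. However, your sketch stops just before the step that carries the content. Knowing only that the boundary drifts are small and bounded does not keep $F$ in $[0,2]$, since they could accumulate over many blocks. You need their exact values and the fact that they cancel. Writing $i=j+1$, your two-step computation reads
\[
F(2j+1)-F(2j-1)=\mathbf 1_I(2i-2)+\mathbf 1_I(2i-1)+2\cdot\mathbf 1_I(i)-2.
\]
So the pair that enters is $\{2i-2,2i-1\}$, shifted by one from the pair $\{2i-1,2i\}$ in your structural fact. Comparing with the blocks, this difference is:
\begin{enumerate}
\item[(a)] $+2$ if $i=4^k$ with $k\geq1$, since both $2i-2$ and $2i-1$ lie in $[4^k,2\cdot4^k)$;
\item[(b)] $-2$ if $i=2\cdot4^k$, since both lie in $[2\cdot4^k,4^{k+1})$;
\item[(c)] $0$ otherwise.
\end{enumerate}
The jumps alternate in sign and $F(1)=2$. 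Hence for odd $n$ and $k\geq0$, $F(n)=2$ when $2\cdot4^k-1\leq n\leq4^{k+1}-3$, and $F(n)=0$ when $4^{k+1}-1\leq n\leq2\cdot4^{k+1}-3$. Finally, $F(2j)=F(2j-1)+\mathbf 1_I(2j)-1$ is never negative, because $F(2j-1)=0$ forces $2j\in[4^{k+1},2\cdot4^{k+1})\subset I$.

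Completed this way, the induction exhibits the mechanism behind part (2): doubling exchanges $I$ and its complement except at two points per block, which is why the oscillations of $A(n)/n$ cancel. The paper only describes this informally. The paper's direct substitution is shorter and avoids the base-case and parity bookkeeping.
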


\begin{proof}
We first record an explicit formula for $A(n)$. For every $k\geq0$,
\begin{equation}
\label{eq:explicit-formula-A}
A(n)
=
\begin{cases}
\displaystyle
n-\frac{2}{3}(4^k-1),
&
4^k\leq n<2\cdot4^k,
\\[2mm]
\displaystyle
\frac{4^{k+1}-1}{3},
&
2\cdot4^k\leq n<4^{k+1}.
\end{cases}
\end{equation}
Indeed, in the first case,
\[
A(n)
=
\sum_{r=0}^{k-1}4^r+(n-4^k+1)\\
=
n-\frac{2}{3}(4^k-1).
\]
In the second case,
\[
A(n)
=
\sum_{r=0}^{k}4^r
=
\frac{4^{k+1}-1}{3}.
\]

We first prove (1). On each interval
$4^k\leq n<2\cdot4^k$,
the ratio $A(n)/n$ is increasing in $n$, whereas on each interval
$2\cdot4^k\leq n<4^{k+1}$,
it is decreasing in $n$. Therefore,
\[
\liminf_{n\to\infty}\frac{A(n)}{n}
=
\lim_{k\to\infty}
\frac{A(4^{k+1}-1)}{4^{k+1}-1}
=
\frac{1}{3},
\]
and
\[
\limsup_{n\to\infty}\frac{A(n)}{n}
=
\lim_{k\to\infty}
\frac{A(2\cdot4^k-1)}{2\cdot4^k-1} =
\lim_{k\to\infty}
\left(
1-\frac{2(4^k-1)}{3(2\cdot4^k-1)}
\right) = \frac{2}{3}.
\]

We next prove (2). Suppose first that
$4^k\leq n<2\cdot4^k$.
If $k=0$, then $n=1$, and the assertion is immediate. 
Assume that $k\geq1$. Then
\[
2\cdot4^{k-1}
\leq
\left\lceil\frac{n}{2}\right\rceil
\leq
4^k.
\]
The upper endpoint $4^k$ occurs precisely when
$n=2\cdot4^k-1$.
It follows from \eqref{eq:explicit-formula-A} that
\[
A\left(\left\lceil\frac{n}{2}\right\rceil\right)
=
\frac{4^k-1}{3}
+
\mathbf{1}_{\{n=2\cdot4^k-1\}},
\]
where $\mathbf{1}_{E}$ denotes the indicator of a condition $E$. Hence
\[
A(n)
+
2A\left(\left\lceil\frac{n}{2}\right\rceil\right)
-n
=
2\cdot \mathbf{1}_{\{n=2\cdot4^k-1\}}
\in
\{0,2\}.
\]

Next suppose that
$2\cdot4^k\leq n<4^{k+1}$.
Then
\[
4^k
\leq
\left\lceil\frac{n}{2}\right\rceil
\leq
2\cdot4^k.
\]
The upper endpoint $2\cdot4^k$ occurs precisely when
$n=4^{k+1}-1$.
Therefore, \eqref{eq:explicit-formula-A} gives
\[
A\left(\left\lceil\frac{n}{2}\right\rceil\right)
=
\left\lceil\frac{n}{2}\right\rceil
-
\frac{2}{3}(4^k-1)
-
\mathbf{1}_{\{n=4^{k+1}-1\}}.
\]
Consequently,
\[
A(n)
+
2A\left(\left\lceil\frac{n}{2}\right\rceil\right)
-n
=
2\left\lceil\frac{n}{2}\right\rceil
-n+1
-
2\cdot \mathbf{1}_{\{n=4^{k+1}-1\}}.
\]
If $n\neq4^{k+1}-1$, then
\[
2\left\lceil\frac{n}{2}\right\rceil-n
\in
\{0,1\},
\]
and hence the right-hand side belongs to $\{1,2\}$. If
$n=4^{k+1}-1$,
then $n$ is odd and
\[
2\left\lceil\frac{n}{2}\right\rceil-n=1.
\]
In this case, the right-hand side is equal to $0$. This proves (2).
\end{proof}

Let $X$ and $Y$ be the one-sided full-shifts 
on the alphabet $\{0,1,2,3\}\times \{0,1\}$ and $\{0,1\}$, respectively:
\[ X = \left(\{0,1,2,3\}\times \{0,1\}\right)^{\mathbb{N}}, \quad 
    Y = \{0,1\}^{\mathbb{N}}. \]
We denote the shift maps on $X$ and $Y$ by $T$ and $S$, respectively.
Let $\pi\colon X\to Y$ be the natural projection: 
\[ \pi\left((a_n, b_n)_{n\in \mathbb{N}}\right) = (b_n)_{n \in \mathbb{N}}, \qquad 
    (a_n\in \{0,1,2,3\}, b_n\in \{0,1\}). \]
We now use the set $I$ to specify the coordinates at which symbols may vary; 
we define a closed and noninvariant subset $\Omega \subset X$ by 
\[ \Omega = \{(x_n)_{n\in \mathbb{N}} \mid x_n 
= (0,0) \text{ for $n\not\in I$}\}. \]
For this example with $w = 1/2$, we will show
\[ \hinf^w(\Omega, T) < \FHh^w(\Omega, T) < \h^w(\Omega, T). \]
The mechanism behind this result is as follows:
The covering entropies reflect the oscillation of $A(n)/n$.
For the Feng--Huang entropy, however, the relevant cylinder count
involves the combination $A(n)+2A(\lceil n/2\rceil)$,
which differs from $n$ by a uniformly bounded amount.
In other words, although $A(n)/n$ exhibits substantial oscillations, 
these fluctuations cancel 
when the contributions from the two time scales are combined.

We define a metric $\mathbf{d}$ on $X$ by 
\[ \mathbf{d}(x, x^\prime) = 2^{-\min\{n\geq 1\mid x_n\neq x^\prime_n\}}. \]
We assume $\mathbf{d}(x, x^\prime) = 0$ if $x=x^\prime$.
Define a metric $\mathbf{d}^\prime$ on $Y$ similarly.

Let $0<\varepsilon<1/2$.
It is straightforward to see that 
\[ 2^{A(n)}\cdot 4^{w A(n)} \leq \#^w(\Omega, n, \varepsilon) \leq
   \mathrm{Const_\varepsilon}\cdot 2^{A(n)}\cdot 4^{w A(n)}. \]
In particular, for $w=1/2$, 
\[ 2^{2A(n)} \leq \#^{1/2}(\Omega, n, \varepsilon) \leq 
 \mathrm{Const}_{\varepsilon} \cdot 2^{2A(n)}. \]
Therefore, we obtain
\[ \hinf^{1/2}(\Omega, T)  = \frac{2}{3} \log 2, \quad 
    \h^{1/2}(\Omega, T) =  \frac{4}{3} \log 2. \]

The next proposition is the main result of this section.

\begin{proposition} \label{prop: FH entrpy of a strange example}
$\FHh^{1/2}(\Omega, T) = \log 2$. Consequently,
\begin{equation*}
  \hinf^{1/2}(\Omega, T)  = \frac{2}{3} \log 2 
   < \FHh^{1/2}(\Omega, T)  = \log 2 < 
    \h^{1/2}(\Omega, T) =  \frac{4}{3} \log 2. 
\end{equation*}    
\end{proposition}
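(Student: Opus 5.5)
The plan is to compute $\FHh^{1/2}(\Omega,T)$ directly from the definition. The key observation is that, for the metrics chosen here, $w$-weighted Bowen balls are essentially cylinder sets. Both the upper and lower bounds then reduce to counting exponents of the form $A(n)+2A(\lceil n/2\rceil)$, which Lemma \ref{lemma: combinatorial property of I} (2) pins down to within $2$ of $n$. The displayed chain of inequalities then follows from the values $\hinf^{1/2}(\Omega,T)=\frac23\log 2$ and $\h^{1/2}(\Omega,T)=\frac43\log 2$ computed just before the proposition.

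\emph{Describing the balls.} Fix $\varepsilon\in(0,1/2)$ and let $c=c(\varepsilon)\geq 0$ be the integer with $2^{-c-1}<\varepsilon\leq 2^{-c}$. Then $\mathbf d(x,x')<\varepsilon$ holds iff $x$ and $x'$ agree in their first $c$ coordinates, and likewise in $Y$. Write $m=\lceil n/2\rceil$. The ball $B^{1/2}_n(x,\varepsilon)$ is then exactly the cylinder $C$ of points $x'$ with
\begin{itemize}
\item $x'_k=x_k$ (full symbol in $\{0,1,2,3\}\times\{0,1\}$) for $k\leq m+c-1$, and
\item the $\{0,1\}$-component of $x'_k$ equal to that of $x_k$ for $m+c\leq k\leq n+c-1$.
\end{itemize}
A point of $\Omega$ is free only at coordinates in $I$, where it may take any of $8$ symbols. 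Hence the number of such cylinders meeting $\Omega$ is
\[
8^{A(m+c-1)}\,2^{A(n+c-1)-A(m+c-1)}=2^{A(n+c-1)+2A(m+c-1)}.
\]
Since $A$ increases by at most $1$ per step, this equals $2^{A(n)+2A(m)}$ up to a factor $2^{3c}$. By Lemma \ref{lemma: combinatorial property of I} (2), it is therefore at most $2^{n+2+3c}$.

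\emph{Upper bound.} For every $N$, cover $\Omega$ by these cylinders at the single length $n=N$. This gives
\[
\Lambda^{1/2,s}_{N,\varepsilon}(\Omega)\leq 2^{N+2+3c}e^{-sN},
\]
which tends to $0$ for $s>\log 2$. Hence $\FHh^{1/2}(\Omega,T,\varepsilon)\leq\log 2$ for every $\varepsilon$.

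\emph{Lower bound.} Use a mass distribution argument. Let $\mu$ be the product measure on $\Omega$ that is uniform on the $8$ symbols at each coordinate in $I$ and is a point mass at $(0,0)$ elsewhere. For any $x\in X$ and $n\geq1$, the computation above gives
\[
\mu\bigl(B^{1/2}_n(x,\varepsilon)\bigr)\leq 8^{-A(m)}\,2^{-(A(n)-A(m))}=2^{-(A(n)+2A(m))}\leq 2^{-n}.
\]
The last step again uses Lemma \ref{lemma: combinatorial property of I} (2), and enlarging the cylinder only increases this bound, so the boundary shift $c$ causes no harm. Now take any countable cover of $\Omega$ by balls $B^{1/2}_{n_i}(x_i,\varepsilon)$. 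Then
\[
1=\mu(\Omega)\leq\sum_i 2^{-n_i}=\sum_i e^{-(\log 2)n_i},
\]
so $\Lambda^{1/2,\log 2}_\varepsilon(\Omega)\geq1$ and $\FHh^{1/2}(\Omega,T,\varepsilon)\geq\log 2$. If a measure-free argument is preferred, compactness of $\Omega$ and openness of the balls reduce to finite covers. On a finite cover, counting cylinders at the maximal length gives the same inequality.

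\emph{Main obstacle.} The only delicate point is the bookkeeping of the radius shift $c$ and the ceiling $\lceil n/2\rceil$. One must make sure the balls really are the stated cylinders, and that the bounded discrepancies only affect constant factors and never the exponent. This is exactly where the uniform bound in Lemma \ref{lemma: combinatorial property of I} (2) is essential; the oscillation of $A(n)/n$ from part (1) would otherwise spoil the exponent.
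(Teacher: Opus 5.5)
Your proof is correct and is essentially the paper's argument. The upper bound is the same single-length cover by weighted cylinders: your radius shift $c$ plays the role of the paper's $L$, and Lemma \ref{lemma: combinatorial property of I} (2) absorbs the exponent. Your mass-distribution bound for the uniform product measure is the measure-theoretic form of the paper's Kraft-identity count; the paper instead reduces to finite disjoint covers with $1/4<\varepsilon<1/2$ and then uses monotonicity in $\varepsilon$. The one implicit point, which you handle correctly, is that $\varepsilon<1/2$ forces $c\geq 1$ (not merely $c\geq 0$), so each ball is a subcylinder of the cylinder fixing full symbols up to $\lceil n/2\rceil$ and second coordinates up to $n$.
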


\begin{proof}
First we prove $\FHh^{1/2}(\Omega, T) \leq \log 2$. 
Let $\varepsilon>0$ and $N\in \mathbb{N}$.
We take a natural number $L = L(\varepsilon)$ so large that $2^{-L} < \varepsilon$.
We define a finite subset $\mathcal{C} \subset \Omega$ by
\[ \mathcal{C} = \left\{(a_n, b_n)_{n\in \mathbb{N}} \in \Omega 
\middle|\, a_n = 0 \text{ for $n>\left\lceil\frac{N}{2}\right\rceil+L$, and }
b_n = 0 \text{ for $n> N +L$}\right\}. \]
We have 
\[ \Omega \subset \bigcup_{x\in \mathcal{C}} B^{1/2}_{N}(x, \varepsilon). \]
The cardinality of $\mathcal{C}$ is equal to 
$2^{A(N + L)}\cdot 4^{A(\lceil N/2 \rceil+L)}$.
For $s:= \log 2$, we have
\[
  \Lambda^{1/2, s}_{N, \varepsilon}(\Omega)  \leq \sum_{x\in \mathcal{C}} e^{-s N} 
   = 2^{A(N + L)}\cdot 4^{A(\lceil N/2 \rceil+L)}\cdot 2^{-N} 
   = 2^{A(N + L) + 2 A(\lceil N/2 \rceil+L) - N}.
\]
The right-most side is bounded by a constant depending only on 
$L = L(\varepsilon)$ by Lemma \ref{lemma: combinatorial property of I} (2).
Therefore, letting $N\to \infty$, we obtain
$\Lambda^{1/2, s}_{\varepsilon}(\Omega) < \mathrm{Const}_{\varepsilon}$.
This shows $\FHh^{1/2}(\Omega, T, \varepsilon) \leq s = \log 2$.
Letting $\varepsilon \to 0$, we obtain $\FHh^{1/2}(\Omega, T) \leq \log 2$.

Next we prove $\FHh^{1/2}(\Omega, T) \geq \log 2$.
We fix $1/4 < \varepsilon < 1/2$.
Let $x = (a_n, b_n)_{n\in \mathbb{N}}$ and 
$x^\prime = (a_n^\prime, b_n^\prime)_{n\in \mathbb{N}}$ be two points of $X$.
We have $x^\prime \in B^{1/2}_N(x, \varepsilon)$ if and only if
\[ a_n = a_n^\prime \text{ for $1\leq n \leq \lceil N/2\rceil$}, \quad 
   b_n = b_n^\prime \text{ for $1\leq n \leq N$}. \]
It follows that 
\begin{itemize}
  \item if $B^{1/2}_N(x, \varepsilon) 
  \cap B^{1/2}_{N^\prime}(x^\prime, \varepsilon) \neq \emptyset$ with $N\leq N^\prime$, 
  then $B^{1/2}_{N^\prime}(x^\prime, \varepsilon) \subset B^{1/2}_N(x, \varepsilon)$;
  \item $B^{1/2}_N(x, \varepsilon) = B^{1/2}_{N}(x^\prime, \varepsilon)$ 
  for any $x^\prime \in B^{1/2}_N(x, \varepsilon)$.
\end{itemize}

We call a non-empty set of the form $B^{1/2}_N(x, \varepsilon)\cap \Omega$
$(x\in \Omega)$ a \textbf{level $N$ weighted cylinder of $\Omega$}.
The number of level $N$ weighted cylinders of $\Omega$ is equal to
$2^{A(N)}\cdot 4^{A(\lceil N/2\rceil)}$.

Suppose we are given a covering 
\[ \Omega \subset \bigcup_{i=1}^k B^{1/2}_{n_i}(x_i, \varepsilon). \]
We want to estimate $\sum_{i=1}^k e^{-s n_i}$ with $s = \log 2$ from below.
Since weighted cylinders are either disjoint or one is contained in the other, 
we can assume that $B^{1/2}_{n_i}(x_i, \varepsilon)$ $(1\leq i \leq k)$ 
are disjoint.
Moreover we can assume $x_i\in \Omega$ $(1\leq i \leq k)$ because, 
after discarding balls disjoint from $\Omega$, we may choose
each center in $\Omega$ without changing the corresponding ball.

Set $N= \max\{n_i \mid 1\leq i \leq k\}$.
The number of level $N$ weighted cylinders of $\Omega$ 
contained in $B^{1/2}_{n_i}(x_i, \varepsilon)$ is equal to
$2^{A(N) - A(n_i)}\cdot 4^{A(\lceil N/2\rceil) - A(\lceil n_i/2\rceil)}$.
Thus 
\[ \sum_{i=1}^k 2^{A(N) - A(n_i)}\cdot 
   4^{A(\lceil N/2\rceil) - A(\lceil n_i/2\rceil)} 
  =  2^{A(N)}\cdot 4^{A(\lceil N/2\rceil)}. \]
Therefore we obtain
\[ \sum_{i=1}^k 2^{-A(n_i)}\cdot 4^{-A(\lceil n_i/2\rceil)} = 1. \]
This is a kind of \lq\lq{}Kraft identity\rq\rq{} adapted to our situation; see
\cite[\S 5.2]{Cover--Thomas}.

By Lemma \ref{lemma: combinatorial property of I} (2), we have
$A(n_i) + 2 A(\lceil n_i/2\rceil) \geq n_i$.
Hence 
\[ \sum_{i=1}^k 2^{-n_i} \geq 1. \]
Thus $\Lambda^{1/2, \log 2}_{1, \varepsilon}(\Omega) \geq 1$.
This has shown $\FHh^{1/2}(\Omega, T, \varepsilon) \geq \log 2$.
Since $\FHh^{1/2}(\Omega,T,\varepsilon)$ is nondecreasing
as $\varepsilon$ decreases, we obtain
\[
\FHh^{1/2}(\Omega,T)
\geq
\FHh^{1/2}(\Omega,T,\varepsilon)
\geq\log2.
\]
\end{proof}

\end{document}